\newtheorem{observation}{Observation}[section]
\newtheorem{theorem}{Theorem}[section]
\newtheorem*{theorem A}{Theorem A}
\newtheorem*{theorem B}{N\"olker's Theorem}
\newtheorem{lemma}{Lemma}[section]
\newtheorem{corollary}{Corollary}[section]
\newtheorem{definition}{Definition}[section]
\newtheorem{problem}{Problem}
\newtheorem {conjecture}{Conjecture}[section]
\theoremstyle{remark}
\newtheorem{remark}{Remark}[section]
\theoremstyle{remark}
\begin{document}

\begin{frontmatter}
\title{$D$-Antimagic Labelings\\ on Oriented Linear Forests}


\author[label1]{Ahmad Muchlas Abrar}
\author[label2,label3]{Rinovia Simanjuntak $^{*,}$}

\address[label1]{\small Doctoral Program in Mathematics, Faculty of Mathematics and Natural Sciences, \\Institut Teknologi Bandung, Jalan Ganesa 10 Bandung, Indonesia}
\address[label2]{\small Combinatorial Mathematics Research Group,
Faculty of Mathematics and Natural Sciences, \\ Institut Teknologi Bandung, Jalan Ganesa 10 Bandung, Indonesia}
\address[label3]{\small Centre for Research Collaboration in Graph Theory and Combinatorics, Indonesia

\vspace*{2.5ex} 
 {\normalfont 30123302@mahasiswa.itb.ac.id, rino@itb.ac.id\\}
\vspace*{2.5ex} 
 {\normalfont $^*$\textit{corresponding author}}
 }

\begin{abstract}
Let $\overrightarrow{G}$ be an oriented graph with the vertex set $V(\overrightarrow{G})$ and the arc set $A(\overrightarrow{G})$. Suppose that $D\subseteq \{0,1,2,\dots,\partial \}$ is a distance set where $\partial=\max \{d(u,v)<\infty|u,v\in V(\overrightarrow{G})\}$. Given a bijection  $h:V(\overrightarrow{G}) \rightarrow\{1,2,,\dots,|V(\overrightarrow{G})|\}$, the $D$-weight   of a vertex $v\in V(\overrightarrow{G})$ is defined as $\omega_D(v)=\sum_{u\in N_D(v)}h(u)$, where $N_D(v)=\{u\in V|d(v,u)\in D\}$. A bijection $h$ is called a $D$-antimagic labeling if for every pair of distinct vertices $x$ and $y$, $\omega_D(x)\ne \omega_D(y)$. An oriented graph $\overrightarrow{G}$ is called $D$-antimagic if it admits such a labeling. 

In addition to introducing the notion of $D$-antimagic labeling for oriented graphs, we investigate some properties of $D$-antimagic oriented graphs. In particular, we study $D$-antimagic linear forests for some $D$. We characterize $D$-antimagic paths where $1 \in D$, $n-1\in D$, or $\{0,n-2\}\subset D$. We characterize distance antimagic trees and forests. We conclude by constructing $D$-antimagic labelings on oriented linear forests.

\let\thefootnote\relax\footnotetext{Received: xx xxxxx 20xx,\quad
  Accepted: xx xxxxx 20xx.\\[3ex]
  }
  
\end{abstract}

\begin{keyword}
$D$-antimagic labeling \sep oriented graph \sep oriented path \sep oriented tree \sep oriented linear forest

Mathematics Subject Classification : 05C78, 05C12

\end{keyword}

\end{frontmatter}

\section{Introduction}

Let $\overrightarrow{G}$ be a finite, simple, and oriented graph with the vertex set $V(\overrightarrow{G})$ and the arc set $A(\overrightarrow{G})$. For $u,v\in V(\overrightarrow{G})$, \textit{the arc from $u$ to $v$} is denoted as $(u,v)$. \textit{The in-neighbors of $u$} are all vertices $v$ such that $(v,u)\in A(\overrightarrow{G})$ and \textit{the out-neighbors of $u$} are all vertices $v$ such that $(u,v)\in A(\overrightarrow{G})$. \textit{The in-degree of $u$}, denoted $d^-(u)$, is the number of in-neighbors of $u$ and \textit{the out-degree of $u$}, denoted $d^+(u)$, is the number of out-neighbors of $u$. A vertex with zero in-degree is called a \textit{source}, while a vertex with zero out-degree is called a \textit{sink}. 
\textit{The distance from $u$ to $v$}, denoted $d(u,v)$, is the length of a shortest directed path from $u$ to $v$, if it exists; otherwise, $d(u,v)=\infty$. $\overrightarrow{G}$ is \textit{connected} if for each pair of distinct vertices $u$ and $v$, either $d(u,v)$ or $d(v,u)$ is finite. If both distances are finite, $\overrightarrow{G}$ is classified as \textit{strongly connected}.

Since each arc in an oriented graph contributes exactly one in-degree and one out-degree, we have the following: 
\begin{lemma} [Handshake Lemma for Oriented Graphs]  \cite{DB West} \label{lem:handshake}
If $\overrightarrow{G}$ is an oriented graph and $u\in V(\overrightarrow{G})$, 
then
$$\sum_{u\in V(\overrightarrow{G})}^{u}{d^{-}(u)}=\sum_{u\in V(\overrightarrow{G})}^{u}{d^{+}(u)}=|A(\overrightarrow{G})|.$$
\end{lemma}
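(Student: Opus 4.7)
The proof is a standard double-counting argument on the arc set $A(\overrightarrow{G})$. The plan is to count $|A(\overrightarrow{G})|$ in two different ways by grouping arcs according to their tails (which yields out-degrees) and by their heads (which yields in-degrees). The single underlying fact I would use is that every arc, being an ordered pair, has exactly one tail and exactly one head, and that two arcs coincide if and only if both their tails and heads coincide.

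For the first equality, I would partition $A(\overrightarrow{G})$ into the sets $A^+(u) = \{(u,v) \in A(\overrightarrow{G})\}$ indexed by $u \in V(\overrightarrow{G})$. Because each arc has a unique tail, these sets are disjoint and their union is $A(\overrightarrow{G})$, and by definition $|A^+(u)| = d^+(u)$. Summing cardinalities gives $\sum_{u} d^+(u) = |A(\overrightarrow{G})|$. The second equality follows by the symmetric argument: partition $A(\overrightarrow{G})$ into $A^-(v) = \{(u,v) \in A(\overrightarrow{G})\}$ indexed by $v \in V(\overrightarrow{G})$, use uniqueness of the head of each arc to conclude these form a partition with $|A^-(v)| = d^-(v)$, and sum to obtain $\sum_{v} d^-(v) = |A(\overrightarrow{G})|$.

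There is no genuine obstacle in this proof; the result is the oriented analogue of the classical handshake lemma and is in fact slightly simpler, since each arc contributes exactly $1$ to each of the two sums (rather than contributing $2$ to a single degree sum as in the undirected case). The only thing to be careful about is to make the partition statements precise, namely to invoke the uniqueness of the tail and head of an ordered pair rather than to appeal to a vague ``each arc is counted once'' intuition.
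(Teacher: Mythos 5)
Your proof is correct and matches the paper's justification, which is simply the one-line observation that each arc contributes exactly one to the in-degree sum and one to the out-degree sum (the result is otherwise cited to West's textbook without further argument). Your partition of $A(\overrightarrow{G})$ by tails and by heads is just a more formal rendering of that same idea.
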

    
In 2013, Kamatchi and Arumugam \cite{Kamatchi-64-13} introduced the concept of distance antimagic labeling for undirected graphs. Let $G=(V,E)$ be an undirected and simple graph of order $n$. Let $h:V(G)\rightarrow \{1,2,\dots,n\}$ be a bijection.  \textit{The weight of a vertex $u$} is defined as $\omega(u)=\sum_{v\in N(u)}{h(v)}$, where $N(u)$ is the neighbourhood of $u$. If $\omega(x)\ne \omega(y)$ for any distinct vertices $u$ and $v$, then $h$ is called a \textit{distance antimagic labeling} and $G$ is said to be \textit{distance antimagic}. Several classes of graphs have been proven to be distance antimagic, which include paths $P_n$, cycles $C_n \ (n\ne 4)$, and wheels $W_n \ (n\ne 4)$ \cite{Kamatchi-64-13}, as well as hypercube $Q_n \ (n\ge 4)$ \cite{Kamatchi cube}, and trees with $l$ leaves and $2l$ vertices \cite{Llado}. Furthermore, some graphs resulting from the cartesian, strong, direct, lexicographic, corona, and join products have also been shown to be antimagic distance \cite{simanjuntakprod,Handa}. These results led to the following conjecture of Kamatchi and Arumugam \cite{Kamatchi-64-13}.

\begin{conjecture} \cite{Kamatchi-64-13} \label{conj:DA}
A graph $G$ is distance antimagic if and only if there are no two distinct vertices with identical neighborhoods.
\end{conjecture}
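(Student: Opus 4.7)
\medskip
\noindent\textbf{Proof proposal for Conjecture~\ref{conj:DA}.} The necessity is a one-line observation: if distinct vertices $u,v$ satisfy $N(u)=N(v)$ (which, in a loopless simple graph, also forces them to be nonadjacent), then for every bijection $h$ we have $\omega(u)=\sum_{w\in N(u)}h(w)=\sum_{w\in N(v)}h(w)=\omega(v)$, so $G$ cannot be distance antimagic.

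For the sufficiency, which is the substantive direction and, to my knowledge, still open in full generality, I would first try an algebraic approach via the Combinatorial Nullstellensatz. Introduce indeterminates $x_1,\dots,x_n$ indexed by the vertices and consider
\[
P(x_1,\dots,x_n)=\prod_{\{u,v\}}\Bigl(\sum_{w\in N(u)}x_w-\sum_{w\in N(v)}x_w\Bigr),
\]
where the product ranges over unordered pairs of distinct vertices. By hypothesis every factor is a nonzero linear form, so $P\not\equiv 0$, and any bijection $V\to\{1,\dots,n\}$ on which $P$ is nonzero is the required antimagic labeling. The plan is to locate a monomial whose multidegree is componentwise bounded by $(n-1,\dots,n-1)$ and whose coefficient is nonzero, then to invoke a permutation variant of Alon's theorem (in the spirit of Alon--Tarsi) to extract a bijective witness. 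As a second line of attack I would try induction on $n$, choosing a vertex $v$ whose deletion preserves the twin-free property, labeling $G-v$ by the inductive hypothesis, and absorbing the reinsertion of $v$ by a controlled swap of two labels. As a third line I would try the probabilistic method, aiming to bound each pair's collision probability by $O(1/n)$ via a Littlewood--Offord-type anti-concentration estimate for uniformly random permutations and closing a union bound.

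The main obstacle, in every approach, is bridging the gap between the trivial separation of a single pair and the simultaneous separation of $\binom{n}{2}$ pairs. The degree of $P$ vastly exceeds $n$, so pinning down the required top monomial is delicate; the inductive route requires a swap lemma that does not obviously exist for arbitrary twin-free graphs; and the probabilistic route is obstructed by strong correlations between weight differences of pairs that share common neighbors, so a na\"{\i}ve union bound will not close. The fact that the conjecture has been verified only for specific families (paths, cycles, wheels, hypercubes, certain trees, and various graph products) strongly suggests that exploiting graph-class structure is unavoidable, and this is indeed the strategy adopted in the present paper when passing from undirected distance antimagic labelings to their oriented $D$-antimagic counterparts on linear forests.
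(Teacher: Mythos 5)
The statement you were asked about is a \emph{conjecture}, not a theorem: the paper states it (attributing it to Kamatchi and Arumugam) without proof, and notes only that it has been verified computationally for all graphs of order up to $8$. There is therefore no proof in the paper to compare against. Your treatment of the necessity direction is correct and is the standard observation: two vertices with identical neighborhoods receive identical weights under any labeling, so the twin-free condition is necessary. This much is genuinely established.

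The sufficiency direction, however, is not proved by your proposal, and you say so yourself --- what you offer is a catalogue of plausible attack strategies (Combinatorial Nullstellensatz over the polynomial $P$, induction with a swap lemma, a probabilistic/anti-concentration argument) together with an accurate diagnosis of why each one stalls. That self-assessment is sound: the degree of $P$ is $\binom{n}{2}$, far exceeding what a permutation variant of the Nullstellensatz can handle directly; no general swap lemma is known for twin-free graphs; and the weight differences of pairs sharing neighbors are too correlated for a union bound. So the concrete gap is simply that the ``if'' direction remains open, and nothing in your proposal closes it. To the extent your write-up is graded as a proof of the conjecture, it fails; to the extent it is graded as a correct proof of the necessity half plus an honest survey of obstructions to the other half, it is accurate and consistent with the literature the paper cites. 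One small point worth tightening: your parenthetical that $N(u)=N(v)$ ``forces them to be nonadjacent'' is true for open neighborhoods in a loopless graph, but it is not needed for the weight computation, so you could drop it.
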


Conjecture \ref{conj:DA} has been computationally verified for all graphs of order up to 8 \cite{AAC}. In the same paper, Simanjuntak \textit{et. al.} generalized the concept of distance antimagic labeling to $D$-antimagic labeling. Let $G$ be a graph and $D\subseteq \{0,1,2,\dots, diam(G)\}$ be a non-empty distance set. For a vertex $u$ in $G$, $N_D(u)=\{v| d(v,u)\in D\}$ is the \textit{$D$-neighborhood of $u$}. A \textit{$D$-antimagic labeling of $G$} is a bijection $f:V(G)\rightarrow \{0,1,2,\dots, diam(G)\}$ such that the $D$-weight $\omega_D(u)=\sum_{v\in N_D(u)}f(v)$ is distinct for every vertex $u$. In this case, $G$ is called \textit{$D$-antimagic.} Note that the distance antimagic labeling is a $\{1\}$-antimagic labeling. 
Thus, a generalization to Conjecture \ref{conj:DA} was also proposed.
\begin{conjecture} \cite{AAC}
A graph $G$ is $D$-antimagic if and only if each vertex in $G$ has distinct $D$-neighborhood.
\end{conjecture}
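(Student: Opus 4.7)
The plan is to separate the two implications. The necessary direction is essentially immediate: if there exist distinct $u,v\in V(G)$ with $N_D(u)=N_D(v)$ as subsets of $V(G)$, then for any bijection $h:V(G)\to\{1,\dots,n\}$ we have
\[
\omega_D(u)=\sum_{w\in N_D(u)} h(w)=\sum_{w\in N_D(v)} h(w)=\omega_D(v),
\]
so no bijection $h$ can be $D$-antimagic. I would dispose of this direction in a single sentence; the observation holds regardless of whether $0\in D$.

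The sufficient direction is the substantive and still-open content of the conjecture, and this is where essentially all the work would lie. My approach would be algebraic/probabilistic. For each unordered pair of distinct vertices $\{u,v\}$, write
\[
\omega_D(u)-\omega_D(v)=\sum_{w\in V(G)} c_{uv}(w)\, h(w),
\qquad
c_{uv}(w)=\mathbf{1}[w\in N_D(u)]-\mathbf{1}[w\in N_D(v)],
\]
and note that the hypothesis $N_D(u)\ne N_D(v)$ guarantees that $c_{uv}$ is a nonzero vector in $\{-1,0,1\}^{V(G)}$. One would then attempt to show---either via a Combinatorial Nullstellensatz argument applied to the polynomial $\prod_{\{u,v\}}(\omega_D(u)-\omega_D(v))$ evaluated at permutations of $\{1,\dots,n\}$, or via a direct anti-concentration estimate for such linear forms under a uniformly random bijection $h$---that there exists a labeling simultaneously avoiding all of the finitely many ``bad'' affine hyperplanes $\{h:\omega_D(u)=\omega_D(v)\}$.

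The main obstacle, and presumably why the conjecture is still open, is that $|N_D(u)\triangle N_D(v)|$ can be as small as $2$, in which case $\omega_D(u)-\omega_D(v)$ reduces to a difference $h(w_1)-h(w_2)$ of two labels, and the collision probability under a uniformly random labeling is on the order of $1/n$. A naive union bound over the $\binom{n}{2}$ pairs then fails. Overcoming this would likely require a more refined structural study of how the family $\{N_D(v):v\in V(G)\}$ sits inside $2^{V(G)}$---perhaps exploiting matroid-like exchange properties, or leveraging the partial cases already known for paths, cycles, wheels, hypercubes, and certain trees cited in the introduction---and packaging these constraints into a Lov\'asz Local Lemma or greedy extension argument rather than a blunt union bound. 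Given the difficulty of the undirected conjecture, a realistic intermediate goal---and, I suspect, the actual content of the results that follow in this paper---is to settle the conjecture for particular graph classes (such as the oriented linear forests announced in the abstract) and particular distance sets $D$, treating the general statement as motivating context rather than a target for direct attack.
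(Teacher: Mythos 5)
This statement is a \emph{conjecture} quoted from the literature; the paper does not prove it and offers no argument beyond noting that it has been verified computationally for all graphs of order up to $8$. So there is no proof in the paper to compare yours against. Your treatment of the necessary direction is correct and is the standard one: if $N_D(u)=N_D(v)$ for distinct $u,v$, then $\omega_D(u)=\omega_D(v)$ under every bijection, so $G$ cannot be $D$-antimagic. (The paper implicitly uses exactly this observation several times, e.g.\ for $\overrightarrow{C_4}$ with $D=\{0,2\}$ and for the sink arguments in the path and tree sections.)

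The sufficient direction, however, is the entire content of the conjecture, and your proposal does not prove it --- you sketch a Combinatorial Nullstellensatz / anti-concentration program and then correctly identify why it breaks: when $|N_D(u)\triangle N_D(v)|=2$ the collision probability for a uniformly random labeling is of order $1/n$, and a union bound over $\binom{n}{2}$ pairs fails. You are candid that this remains open, so there is no false claim, but to be clear: what you have written is a research plan, not a proof, and the statement remains a conjecture. Your closing diagnosis is accurate --- the paper's actual contribution is precisely to settle instances of this question (and its directed analogue) for specific classes such as oriented paths, trees, and linear forests with specific distance sets, rather than to attack the general conjecture.
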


Although much research has focused on antimagic properties in undirected graphs, the corresponding study for directed graphs, where the direction of arcs adds a layer of complexity, is still in its infancy. This paper aims to explore this less studied area and bridge that gap by extending the concept of $D$-antimagic labeling to oriented graphs, specifically in the context of linear forest graphs. First, we naturally generalize the definition of $D$-antimagic labeling for undirected graphs and define it for oriented graphs.

\begin{definition}
Let $\overrightarrow{G}$ be an oriented graph, $\partial=\max\{d(u,v)<\infty|u,v\in V(\overrightarrow{G})\}$, and $D\subseteq \{0,1,2,\dots,\partial\}$ be a non-empty distance set.
A \textbf{$D$-antimagic labeling of $\overrightarrow{G}$} is a bijection $f:V(\overrightarrow{G}) \rightarrow\{1,2,\dots,|V(\overrightarrow{G})|\}$ such that for any pair of distinct vertices $u$ and $v$, $\omega_D(u) \neq \omega_D(v)$, with $\omega_D(v)=\sum_{u\in N_D(v)}f(u)$ is the \textbf{$D$-weight of a vertex $v$}, where $N_D(v)=\{u|d(v,u)\in D\}$.  In this case, $\overrightarrow{G}$ is called \textbf{$D$-antimagic}.
\end{definition}
Note that when $\overrightarrow{G}$ is strongly connected, then $\partial$ is the diameter of $\overrightarrow{G}$.

This paper studies $D$-antimagic linear forests for some $D$. In particular, we characterize $D$-antimagic paths where $1 \in D$, $n-1\in D$, or $\{0,n-2\}\subset D$ (Section \ref{sec:path}). We characterize distance antimagic trees and forests in Sections \ref{sec:tree} and \ref{sec:forest}. We conclude by constructing $D$-antimagic labelings on oriented linear forests in Section \ref{sec:forest}. 


\section{General Results} \label{sec:general}

We start by considering the "trivial" $D$-antimagic labeling.
\begin{observation}\label{obs:0-antimagic}
All oriented graphs are $\{0\}$-antimagic.
\end{observation}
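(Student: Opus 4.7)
The plan is almost immediate from unpacking the definition. The key observation is that for any oriented graph $\overrightarrow{G}$ and any vertex $v \in V(\overrightarrow{G})$, we have $d(v,u) = 0$ if and only if $u = v$. Consequently, when $D = \{0\}$, the $D$-neighborhood reduces to $N_{\{0\}}(v) = \{v\}$ for every vertex $v$.

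Given this, for an arbitrary bijection $f: V(\overrightarrow{G}) \to \{1,2,\dots,|V(\overrightarrow{G})|\}$, the $\{0\}$-weight of $v$ collapses to $\omega_{\{0\}}(v) = \sum_{u \in N_{\{0\}}(v)} f(u) = f(v)$. Since $f$ is a bijection, distinct vertices $u \neq v$ satisfy $f(u) \neq f(v)$, and hence $\omega_{\{0\}}(u) \neq \omega_{\{0\}}(v)$. Thus every bijective labeling is trivially a $\{0\}$-antimagic labeling, and every oriented graph is $\{0\}$-antimagic.

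There is no real obstacle here: the result holds tautologically because the distance $0$ forces the $D$-neighborhood to be a singleton, and bijectivity of the labeling does the rest. The observation is recorded mainly to motivate excluding the trivial case $D = \{0\}$ (or more generally, to note that $0 \in D$ always contributes only the self-label $f(v)$ to $\omega_D(v)$) in the subsequent nontrivial analyses.
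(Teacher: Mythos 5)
Your proof is correct and matches the paper's (implicit) reasoning: the paper records this as an unproved observation precisely because $N_{\{0\}}(v)=\{v\}$ forces $\omega_{\{0\}}(v)=f(v)$, which is distinct for distinct vertices by bijectivity. Nothing further is needed.
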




Let $\overrightarrow{G}$ be strongly connected with diameter $\partial$. For any vertex $u$ in $\overrightarrow{G}$, the vertex set $V(\overrightarrow{G})$ can be partitioned based on the distance of other vertices to $u$, that is
   \[V(\overrightarrow{G})=\bigcup_{t=0}^{\partial}{N_{\{t\}}(u)},\]
where $N_{\{t_1\}}(u)\cap N_{\{t_2\}}(u)=\emptyset, t_1\neq t_2$. This leads to the following theorem.
\begin{theorem} \label{thm:D*antimagic}
Let $\overrightarrow{G}$ be strongly connected and $D\subset \{0,1,\dots,diam(\overrightarrow{G})\}$ be a distance set. If $D^*=\{0,1,\dots,diam(\overrightarrow{G})\}\backslash D$, then $\overrightarrow{G}$ is $D$-antimagic if and only if $\overrightarrow{G}$ is $D^*$-antimagic.
\end{theorem}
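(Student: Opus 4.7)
The plan is to exploit the partition $V(\overrightarrow{G})=\bigcup_{t=0}^{\partial}N_{\{t\}}(u)$ given in the paragraph preceding the theorem. Since this is a disjoint union that uses every vertex exactly once, for any bijection $f:V(\overrightarrow{G})\to\{1,\dots,n\}$ (where $n=|V(\overrightarrow{G})|$) and any fixed vertex $v$, summing $f$-values over all the distance classes recovers the total label sum
\[
\sum_{t=0}^{\partial}\omega_{\{t\}}(v)\;=\;\sum_{u\in V(\overrightarrow{G})}f(u)\;=\;\frac{n(n+1)}{2}.
\]
Call this constant $S$; crucially, $S$ does not depend on $v$.

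Next I would split the sum according to $D$ and $D^*$. Because $D$ and $D^*$ partition $\{0,1,\dots,\mathrm{diam}(\overrightarrow{G})\}$, we have $\omega_D(v)=\sum_{t\in D}\omega_{\{t\}}(v)$ and $\omega_{D^*}(v)=\sum_{t\in D^*}\omega_{\{t\}}(v)$, so
\[
\omega_D(v)+\omega_{D^*}(v)=S,\qquad\text{i.e.}\qquad \omega_{D^*}(v)=S-\omega_D(v).
\]

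From here the equivalence is immediate. For any two distinct vertices $x,y$, the map $t\mapsto S-t$ is a bijection on the integers, so $\omega_D(x)\ne\omega_D(y)$ if and only if $\omega_{D^*}(x)\ne\omega_{D^*}(y)$. Therefore the same labeling $f$ witnesses $D$-antimagicness exactly when it witnesses $D^*$-antimagicness, and the biconditional follows.

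The only subtlety worth flagging is the requirement that $\overrightarrow{G}$ be strongly connected: this is what guarantees $d(v,u)<\infty$ for every ordered pair $(v,u)$, so that the distance classes $N_{\{t\}}(v)$ with $0\le t\le\mathrm{diam}(\overrightarrow{G})$ actually cover $V(\overrightarrow{G})$. Without strong connectivity some vertices would fall outside $\bigcup_{t=0}^{\partial}N_{\{t\}}(v)$, the constant $S$ argument would fail, and the complementary relation between $\omega_D$ and $\omega_{D^*}$ would collapse. No other obstacle is expected; the proof is essentially a one-line algebraic identity built on the partition already established in the lead-in to the theorem.
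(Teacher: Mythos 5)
Your proof is correct and follows essentially the same route as the paper: both rest on the complementary identity $\omega_{D^*}(v)=S-\omega_D(v)$ obtained from the fact that the distance classes partition $V(\overrightarrow{G})$ under strong connectivity. In fact your version is slightly more careful — you give the constant correctly as $S=\frac{n(n+1)}{2}$, whereas the paper writes it as $\binom{|V(G)|}{2}$, a minor slip that does not affect the argument.
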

\begin{proof}
Let $f$ be a $D$-antimagic labeling of $\overrightarrow{G}$.   Since adjacency is mutually exclusive, the $D^*$-weight of $u$, $\omega_{D^*}(u)=\binom{|V(G)|}{2}-\omega_D(u)$. Since all $D$-weights are different, all $D^*$-weights are also different. We complete the proof by swapping the roles of $D$ and $D^*$.
\end{proof}


The following corollary directly follows from Observation \ref{obs:0-antimagic} and Theorem \ref{thm:D*antimagic}.
\begin{corollary}
    Let $d$ be a diameter of $\overrightarrow{G}$. Any strongly connected $\overrightarrow{G}$ is $\{1,2,\dots,d\}$-antimagic and not  $\{0,1,2,\dots,d\}$-antimagic.
\end{corollary}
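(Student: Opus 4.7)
The corollary splits into two assertions, both of which follow cleanly from the tools already assembled in Section \ref{sec:general}. The plan is as follows.

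For the first assertion, I would set $D = \{0\}$, so that $D^* = \{0, 1, \ldots, d\} \setminus \{0\} = \{1, 2, \ldots, d\}$. Observation \ref{obs:0-antimagic} guarantees that $\overrightarrow{G}$ is $\{0\}$-antimagic (any bijection works, since each singleton weight $\omega_{\{0\}}(v) = f(v)$ is automatically distinct). Because $\overrightarrow{G}$ is strongly connected, Theorem \ref{thm:D*antimagic} then transfers the antimagic property from $D$ to $D^*$, yielding that $\overrightarrow{G}$ is $\{1, 2, \ldots, d\}$-antimagic.

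For the second assertion, I would argue directly from the definition of $D$-weight. With $D = \{0, 1, \ldots, d\}$ and $\overrightarrow{G}$ strongly connected of diameter $d$, every distance $d(v, u)$ is finite and at most $d$, hence lies in $D$. This forces $N_D(v) = V(\overrightarrow{G})$ for every vertex $v$. Consequently, for any bijection $f : V(\overrightarrow{G}) \to \{1, 2, \ldots, n\}$ with $n = |V(\overrightarrow{G})|$, the $D$-weight $\omega_D(v) = \sum_{u \in V(\overrightarrow{G})} f(u) = n(n+1)/2$ is the same constant for every $v$. Whenever $n \geq 2$, no such bijection can distinguish two vertices, so $\overrightarrow{G}$ is not $\{0, 1, \ldots, d\}$-antimagic.

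No step here presents any real obstacle; the corollary is an immediate read-off from Observation \ref{obs:0-antimagic} and Theorem \ref{thm:D*antimagic}, together with the elementary remark that $D = \{0, 1, \ldots, d\}$ exhausts the range of finite distances in a strongly connected graph of diameter $d$. The only mild care point is the degenerate single-vertex case, which is implicitly excluded by the context.
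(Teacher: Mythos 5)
Your proposal is correct and matches the paper, which states only that the corollary follows directly from Observation \ref{obs:0-antimagic} and Theorem \ref{thm:D*antimagic}: the first assertion is obtained exactly as you do, and your direct computation for the second assertion (all $D$-neighborhoods equal $V(\overrightarrow{G})$, so all weights equal $n(n+1)/2$) is the same observation one gets by noting $D^*=\emptyset$ forces all complementary weights to coincide.
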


Although this paper focuses on $D$-antimagic labelings, we could also prove a corresponding result to Theorem \ref{thm:D*antimagic} for a magic labeling version that Marr and Simanjuntak \cite{marr} introduced. A graph $G$ is \textit{$D$-magic} if it admits a bijection $f:V(\overrightarrow{G}) \rightarrow\{1,2,\dots,|V(\overrightarrow{G})|\}$ such that the $D$-weights of each vertex is equal to $\lambda$. In such a case, $\lambda$ is called a \textit{magic constant}. 
\begin{theorem} \label{thm:iffmagic}
Let $\overrightarrow{G}$ be strongly connected, $D\subset \{0,1,\dots,diam(\overrightarrow{G})\}$ be a non-empty distance set, and $D^*=\{0,1,\dots,diam(\overrightarrow{G})\}\backslash D$. Then $\overrightarrow{G}$ is $D$-magic with magic constant $\lambda$ if and only if $\overrightarrow{G}$ is $D^*$-magic with magic constant $\lambda^*=\binom{V(\overrightarrow{G})}{2}-\lambda.$
 \end{theorem}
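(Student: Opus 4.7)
The plan is to mirror the partition argument used in the proof of Theorem \ref{thm:D*antimagic}, but replace the property ``all $D$-weights are distinct'' with ``all $D$-weights are equal.'' The essential ingredient is unchanged: because $\overrightarrow{G}$ is strongly connected, every ordered pair $(u,v)$ of vertices satisfies $d(u,v)\in\{0,1,\dots,\mathrm{diam}(\overrightarrow{G})\}$, so the sets $D$ and $D^*$ induce a partition $V(\overrightarrow{G})=N_D(u)\sqcup N_{D^*}(u)$ for every vertex $u$.

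First, I would fix an arbitrary bijection $f:V(\overrightarrow{G})\to\{1,2,\dots,|V(\overrightarrow{G})|\}$ and record the key identity
$$\omega_D(u)+\omega_{D^*}(u)=\sum_{v\in V(\overrightarrow{G})}f(v),$$
whose right-hand side is independent of $u$. Following the notation used in the proof of Theorem \ref{thm:D*antimagic}, I denote this common quantity by $\binom{V(\overrightarrow{G})}{2}$. This identity is the only place where the hypothesis of strong connectivity is invoked (it guarantees that no vertex is ``missed'' by the partition through an infinite distance).

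For the forward direction, I assume $\omega_D(u)=\lambda$ for every $u\in V(\overrightarrow{G})$. The identity then gives $\omega_{D^*}(u)=\binom{V(\overrightarrow{G})}{2}-\lambda$ for every $u$, so $f$ is automatically a $D^*$-magic labeling with magic constant $\lambda^*=\binom{V(\overrightarrow{G})}{2}-\lambda$. The reverse direction is symmetric: swap the roles of $D$ and $D^*$ and use $(D^*)^*=D$, noting that $\lambda=\binom{V(\overrightarrow{G})}{2}-\lambda^*$.

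I do not anticipate any serious obstacle; the proof is essentially the remark that the involution $D\mapsto D^*$ replaces each vertex weight by a fixed constant minus itself, an operation that preserves the ``all-equal'' property just as readily as the ``all-distinct'' property handled in Theorem \ref{thm:D*antimagic}. The only point requiring a line of care is confirming that the strong-connectivity hypothesis is what makes the partition identity hold on the nose, so that no summand is lost to an infinite distance.
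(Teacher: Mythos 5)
Your proposal is correct and follows essentially the same route as the paper's proof: strong connectivity gives $N_{D^*}(u)=V(\overrightarrow{G})\setminus N_D(u)$, hence $\omega_D(u)+\omega_{D^*}(u)$ equals the constant sum of all labels, and the equivalence follows by symmetry in $D$ and $D^*$. Your version is slightly more explicit about where strong connectivity is used and about the involution $(D^*)^*=D$, but the argument is the same (including inheriting the paper's notation $\binom{V(\overrightarrow{G})}{2}$ for what is really $1+2+\cdots+|V(\overrightarrow{G})|$).
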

\begin{proof}
Suppose that $f$ is a $D$-magic labeling on $\overrightarrow{G}$ such that $\omega_D(u)=\lambda$ holds for every $u\in V(\overrightarrow{G})$. 
Since $\overrightarrow{G}$ is strongly connected, for any vertex $u\in \overrightarrow{G}$, $N_{D^*}(u)=V(\overrightarrow{G})\backslash N_D(u)$. And so $\omega_{D^*}(u)=\binom{V(\overrightarrow{G})}{2}-\lambda$  for every $u\in V(\overrightarrow{G})$. Hence, $f$ is also a magic $D^*$-labeling on $\overrightarrow{G}$. Switching between $D$ and $D^*$, we complete the proof.
\end{proof}

In the same paper, Marr and Simanjuntak \cite{marr} also provided lower and upper bounds for the magic constant of $D$-magic graphs, where the lower bound is sharp, while the upper bound is not. They then asked for a sharp upper bound, which for strongly connected graphs can be answered as a direct consequence of Theorem \ref{thm:iffmagic}.

\begin{theorem}\cite{marr}\label{thm:ub}
Let $\overrightarrow{G}$ be an oriented graph of order $n \ge 3$. If $\overrightarrow{G}$ is $D$-magic with magic constant $\lambda$ then $5 \le \lambda \le \frac{n(n+1)}{2}$.
\end{theorem}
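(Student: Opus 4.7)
The plan is to treat the lower and upper bounds separately. The lower bound $\lambda \ge 5$ is due to Marr and Simanjuntak; their counting argument depends only on comparing admissible label sums against a bijection $f: V(\overrightarrow{G}) \to \{1, 2, \ldots, n\}$ and does not use undirectedness, so it transfers to the oriented setting verbatim. I would invoke their bound rather than reproving it.

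For the upper bound $\lambda \le \frac{n(n+1)}{2}$, the idea is to leverage the $D$-to-$D^*$ duality of Theorem \ref{thm:iffmagic}, interpreting the quantity $\binom{V(\overrightarrow{G})}{2}$ that appears there as the sum of all labels $1 + 2 + \cdots + n = \frac{n(n+1)}{2}$ (this is clear from the proof of that theorem). I would then split into two cases depending on whether $D$ exhausts the available distance set.

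In the trivial case $D = \{0, 1, \ldots, \mathrm{diam}(\overrightarrow{G})\}$, strong connectivity gives $N_D(u) = V(\overrightarrow{G})$ for every $u$, so any bijection $f$ automatically satisfies $\omega_D(u) = \frac{n(n+1)}{2}$ for every $u$. This produces a $D$-magic labeling realizing the upper bound and simultaneously demonstrates its sharpness. In the remaining case $D \subsetneq \{0, 1, \ldots, \mathrm{diam}(\overrightarrow{G})\}$, the complement $D^*$ is a non-empty valid distance set, and Theorem \ref{thm:iffmagic} asserts that $\overrightarrow{G}$ is $D^*$-magic with constant $\lambda^* = \frac{n(n+1)}{2} - \lambda$. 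Applying the lower bound to $\lambda^*$ yields $\frac{n(n+1)}{2} - \lambda \ge 5$, hence $\lambda \le \frac{n(n+1)}{2} - 5 < \frac{n(n+1)}{2}$.

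The main obstacle I anticipate is not computational but rather the bookkeeping around Theorem \ref{thm:iffmagic}: it only applies when $D^*$ is non-empty, which is precisely why the case split is needed. A pleasant by-product of this argument is the stronger statement that the upper bound $\frac{n(n+1)}{2}$ is attained only in the degenerate case $D = \{0, 1, \ldots, \mathrm{diam}(\overrightarrow{G})\}$; for any proper subset $D$, one automatically obtains the sharper bound $\lambda \le \frac{n(n+1)}{2} - 5$.
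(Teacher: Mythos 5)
Your argument for the upper bound has a genuine gap: Theorem \ref{thm:ub} is stated for an \emph{arbitrary} oriented graph of order $n\ge 3$, but the duality you invoke (Theorem \ref{thm:iffmagic}) is only available when $\overrightarrow{G}$ is strongly connected. Without strong connectivity the identity $N_{D^*}(u)=V(\overrightarrow{G})\setminus N_D(u)$ fails, because vertices at infinite distance from $u$ lie in neither $N_D(u)$ nor $N_{D^*}(u)$, so $\omega_D(u)+\omega_{D^*}(u)$ need not equal $\frac{n(n+1)}{2}$ and the complementation step collapses. The repair is much simpler than the machinery you deploy: for any vertex $u$, $N_D(u)\subseteq V(\overrightarrow{G})$, so $\omega_D(u)$ is a sum of distinct labels from $\{1,\dots,n\}$ and hence $\lambda=\omega_D(u)\le 1+2+\cdots+n=\frac{n(n+1)}{2}$. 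No case split on $D$ and no duality is needed for the bound as stated.

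You should also be aware that the paper offers no proof of this statement at all --- it is quoted verbatim from Marr and Simanjuntak \cite{marr}, lower bound included, so your plan to ``invoke their bound rather than reproving it'' for $\lambda\ge 5$ amounts to citing the very theorem you are trying to prove. What your duality argument actually establishes (correctly, once restricted to strongly connected graphs) is the sharper inequality $\lambda\le\frac{n(n+1)}{2}-5$; that is precisely the Corollary the paper derives immediately after Theorem \ref{thm:ub}, and the paper proves it by the same contrapositive application of Theorem \ref{thm:iffmagic} that you describe. So your ``pleasant by-product'' is not a by-product but the paper's next result, while the theorem you were asked to prove is the weaker, hypothesis-free statement that needs only the trivial counting observation above.
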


\begin{corollary}     
Let $\overrightarrow{G}$ be a strongly connected graph of order $n \ge 3$. If $\overrightarrow{G}$ is $D$-magic with magic constant $\lambda$, then $5 \le \lambda \le \frac{n(n+1)}{2}-5$.
\end{corollary}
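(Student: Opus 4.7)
The plan is to invoke Theorem \ref{thm:ub} twice: once on the given $D$-magic labeling and once on the labeling viewed through the complementary distance set supplied by Theorem \ref{thm:iffmagic}. The corollary is essentially just the combination of the two bounds of Theorem \ref{thm:ub} with the $D$-to-$D^{*}$ duality that strong connectivity makes possible.

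For the lower bound, the hypotheses of Theorem \ref{thm:ub} are satisfied verbatim: $\overrightarrow{G}$ is an oriented graph of order $n \ge 3$ admitting a $D$-magic labeling with constant $\lambda$. Therefore $\lambda \ge 5$ is immediate without any further work.

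For the upper bound, I would set $D^{*} = \{0,1,\dots,diam(\overrightarrow{G})\} \setminus D$. Strong connectivity guarantees that $N_{D}(u)$ and $N_{D^{*}}(u)$ partition $V(\overrightarrow{G})$ at every vertex $u$, which is precisely the setup of Theorem \ref{thm:iffmagic}; applying it produces a $D^{*}$-magic labeling of $\overrightarrow{G}$ with magic constant $\lambda^{*} = \frac{n(n+1)}{2} - \lambda$ (the sum of all labels minus the contribution on $N_{D}(u)$). A second application of Theorem \ref{thm:ub}, this time to $\lambda^{*}$, yields $\lambda^{*} \ge 5$, which rearranges to $\lambda \le \frac{n(n+1)}{2} - 5$, as required.

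The only mild subtlety to flag is that to legitimately invoke Theorem \ref{thm:iffmagic} one needs $D^{*}$ to be non-empty, i.e., $D$ to be a proper subset of $\{0,1,\dots,diam(\overrightarrow{G})\}$. This is implicit in the corollary's statement, since the excluded case $D = \{0,1,\dots,diam(\overrightarrow{G})\}$ forces every vertex weight to equal $\frac{n(n+1)}{2}$, placing $\lambda$ outside the claimed window. With that caveat handled, there is no substantive obstacle: the main work was already done in establishing the bijection between $D$-magic and $D^{*}$-magic labelings, and the corollary simply reads off both endpoints from Theorem \ref{thm:ub}.
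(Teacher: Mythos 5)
Your proposal is correct and follows essentially the same route as the paper: the paper also obtains the upper bound by passing to the $D^{*}$-magic labeling via Theorem \ref{thm:iffmagic} and applying the lower bound of Theorem \ref{thm:ub} to $\lambda^{*}$ (phrased there as a proof by contradiction, which is a trivial reformulation of your direct argument). Your added remark that $D$ must be a proper subset so that $D^{*}\neq\emptyset$ is a reasonable point of care that the paper leaves implicit.
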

\begin{proof}
Assume that $\lambda>\frac{n(n+1)}{2}-5$ by a $D$-magic labeling $f$. By Theorem \ref{thm:iffmagic}, there exists a $D^*$-magic labeling $f^*$ with $\lambda^*<5$. This contradicts Theorem \ref{thm:ub}. Thus, $\lambda\le \frac{n(n+1)}{2}-5$.
Figure \ref{fig:DM} confirms the upper bound is sharp.
\end{proof}
\begin{figure}[h!]
    \centering    \includegraphics[width=0.3\linewidth]{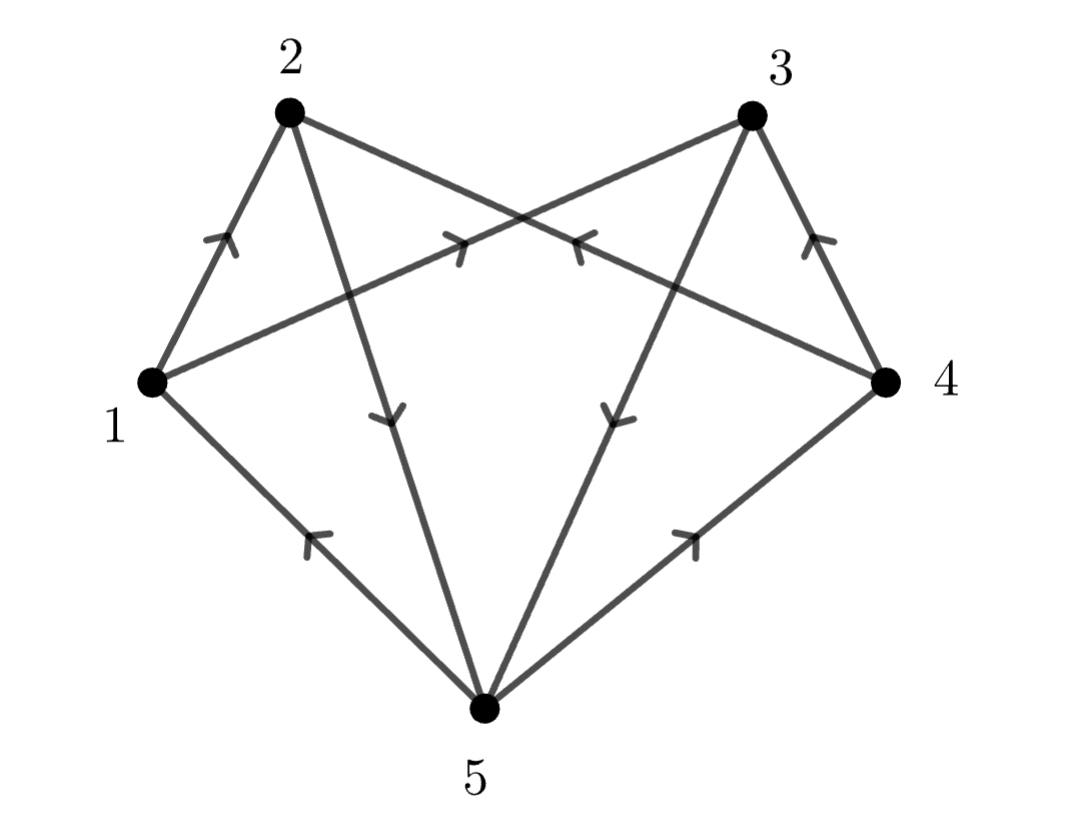}
    \caption{A $\{0,2,3\}$-magic graph of order $5$ with magic constant $10$.}
    \label{fig:DM}
\end{figure}

Another natural question is whether a graph that is both $D_1$ and $D_2$-antimagic is also $(D_1\cup D_2$)-antimagic. Consider the cycle $\overrightarrow{C_4}$ with unidirectional orientation with
$D_1=\{0\}$ and $D_2=\{2\}$. By Observation \ref{obs:0-antimagic}, $\overrightarrow{C_4}$ is $\{0\}$-antimagic, and by a bijection that assigns the vertices to $1,2,3,4$ cyclically, $\overrightarrow{C_4}$ is also $\{2\}$-antimagic. However, for $D=D_1\cup D_2=\{0,2\}$, two vertices of distance 2 have the same $D$-neighborhood, and so $\overrightarrow{C_4}$ is not $D$-antimagic. The undirected cycle $C_4$ also serves as a counterexample for the undirected version of the question. 

\begin{remark} 
 A (oriented or not) $D_1$ and $D_2$-antimagic graph, is not necessarily $(D_1\cup D_2)$-antimagic. 
\end{remark}


Finally, we consider a forbidden orientation for a distance antimagic graph, where the existence of sinks is essential. 

\begin{theorem} 
Let $\overrightarrow{G}$ be an oriented graph. If $\overrightarrow{G}$ has more than one sink or exactly one sink with two or more in-neighbors of out-degree one, then $\overrightarrow{G}$ is not distance antimagic. 
\end{theorem}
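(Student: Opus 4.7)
The plan is to unpack what ``distance antimagic'' means for an oriented graph: this is the case $D=\{1\}$, so $N_D(v)=\{u : d(v,u)=1\}$ is exactly the out-neighborhood $N^+(v)$, and the weight is $\omega(v)=\sum_{u\in N^+(v)} f(u)$. In particular, a sink (out-degree $0$) has empty out-neighborhood and therefore weight $0$ under any bijection $f$. The proof will exhibit, in each of the two hypothesized configurations, a pair of distinct vertices forced to receive the same weight, contradicting distance antimagicness.

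First I would handle the case where $\overrightarrow{G}$ contains two distinct sinks $s_1$ and $s_2$. Since both have out-degree zero, the sum defining $\omega(s_1)$ and $\omega(s_2)$ is an empty sum, so $\omega(s_1)=0=\omega(s_2)$ for every bijection $f:V(\overrightarrow{G})\to\{1,\dots,|V(\overrightarrow{G})|\}$. Hence no such $f$ can be a distance antimagic labeling.

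Next I would treat the case of a unique sink $s$ that has two distinct in-neighbors $u_1,u_2$ each of out-degree one. Because $u_i$ is an in-neighbor of $s$, the arc $(u_i,s)$ lies in $A(\overrightarrow{G})$, so $s\in N^+(u_i)$. Since $d^+(u_i)=1$, we conclude $N^+(u_i)=\{s\}$ for $i=1,2$. Therefore $\omega(u_1)=f(s)=\omega(u_2)$ under any bijection $f$, again preventing $f$ from being distance antimagic.

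I do not anticipate a substantial obstacle here: the argument is essentially a pigeonhole observation on empty (or singleton) out-neighborhoods. The only care needed is to keep the out-degree/in-degree conventions straight (distance $d(v,u)=1$ corresponds to an arc $(v,u)$, hence out-neighbors contribute to $\omega(v)$) and to observe that $u_1\ne u_2$ guarantees the two forced-equal weights belong to distinct vertices.
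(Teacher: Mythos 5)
Your proof is correct and follows essentially the same two-case argument as the paper: two sinks force two zero weights, and two out-degree-one in-neighbors of the unique sink both receive weight $f(s)$. In fact your write-up is slightly cleaner, since the paper's second case contains a typo ($\omega(s_1)=\omega(s_2)=f(s_1)$ where it means $\omega(u_1)=\omega(u_2)=f(s_1)$) that you avoid.
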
 
\begin{proof} 
Assume that $\overrightarrow{G}$ has more than one sink, say $s_1$ and $s_2$. Since $N(s_1)=N(s_2)=\emptyset$, then for any bijection, $\omega(s_1)=\omega(s_2)=0$. 
Now assume that $\overrightarrow{G}$ has exactly one sink, say $s_1$, with two or more in-neighbors of out-degree one, say $u_1$ and $u_2$. Consequently, for any bijection $f$, $\omega(s_1)=\omega(s_2)=f(s_1)$. 
\end{proof}


\section{$D$-Antimagic Labeling on Oriented Paths} \label{sec:path} 

We start our observation with the simplest tree, that is the path. It is worth mentioning that, for any orientation, a path of two or more vertices has at least one sink and one source.
\begin{lemma} \label{lem:sinksource} For $n\ge 2$, any oriented path $\overrightarrow{P_n}$ has at least one sink and one source. \end{lemma}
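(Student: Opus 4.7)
The plan is to reduce the statement to a short case analysis based on the orientations of the two endpoints of the path. Label the vertices as $v_1,v_2,\dots,v_n$. Since each endpoint has underlying degree $1$, in $\overrightarrow{P_n}$ each of $v_1$ and $v_n$ is either a source or a sink (its unique incident arc is either outgoing or incoming). Hence there are three cases up to symmetry: the endpoints have opposite types, both are sources, or both are sinks.

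If $v_1$ and $v_n$ are of opposite types, the conclusion is immediate. Suppose instead that both $v_1$ and $v_n$ are sources; we will locate a sink in the interior. Record the orientation of each edge $v_iv_{i+1}$ by a symbol $\epsilon_i\in\{+,-\}$, where $+$ means the arc is $(v_i,v_{i+1})$ and $-$ means $(v_{i+1},v_i)$. Since $v_1$ is a source, $\epsilon_1=+$, and since $v_n$ is a source, $\epsilon_{n-1}=-$. Therefore the finite sequence $\epsilon_1,\dots,\epsilon_{n-1}$ changes from $+$ to $-$ somewhere, so there is an index $i$ with $\epsilon_i=+$ and $\epsilon_{i+1}=-$. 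The vertex $v_{i+1}$ then has both of its incident edges entering it, so $d^+(v_{i+1})=0$ and $v_{i+1}$ is a sink.

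The remaining case, where both endpoints are sinks, is entirely symmetric and yields an interior vertex that is a source (the argument swaps $+\leftrightarrow -$ and the roles of $d^+$ and $d^-$). This completes all cases.

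I do not anticipate any real obstacle: the underlying graph is a tree and thus $\overrightarrow{P_n}$ is acyclic, so the existence of a source and of a sink is essentially a discrete intermediate-value observation on the edge-orientation sequence. The only thing to be slightly careful about is the edge case $n=2$, which is immediate since the single arc produces exactly one source and one sink.
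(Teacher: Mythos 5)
Your proof is correct. It follows the same three-way case split on the types of the two endpoints (each endpoint, having underlying degree $1$, is forced to be a source or a sink), but the mechanism for producing the missing interior sink or source is genuinely different from the paper's. The paper invokes the Handshake Lemma: since internal vertices that are neither sinks nor sources contribute one each to the in-degree and out-degree sums, the sinks and sources must balance, which forces an extra internal source when both ends are sinks (and symmetrically). Your argument instead encodes the orientation as a $\pm$ sequence $\epsilon_1,\dots,\epsilon_{n-1}$ and finds an explicit sign change from $+$ to $-$, which pins down a concrete interior sink at $v_{i+1}$. What your route buys is constructiveness and independence from the degree-sum identity: you exhibit the sink rather than inferring its existence from a counting balance. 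What the paper's route buys is slightly more information for free, namely the exact relationship between the numbers of sinks and sources in each case (recorded afterwards as Remark 3.1), which your sign-change argument does not immediately yield. Your handling of the $n=2$ edge case is appropriate, since the interior sign-change step implicitly assumes at least two edges.
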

\begin{proof} By Lemma \ref{lem:handshake}, the sum of in-degrees and the sum of out-degrees in any oriented graph must be equal. Note that each internal vertex that is neither a sink nor a source will increase the number of in-degrees and out-degrees by one each simultaneously, so they can be overlooked. Thus, we only need to consider the possible sinks and sources. There are only three possible cases to keep the number of in-degrees and out-degrees equal.
\begin{description}
    \item[Case 1] If both end vertices are sinks, then in addition to the pairs of internal sink and source (if any), there must be another internal vertex that is a source.
    \item[Case 2] If both end vertices are sources, then in addition to the pairs of internal sink and source (if any), there must be another internal vertex that is a sink.
    \item[Case 3] If the end vertices are already a pair of one sink and one source, then the number of sinks and sources are equal. 
\end{description}
\end{proof}

Thus, we obtain the following characteristics of an orientation on a path. 
\begin{remark}\label{rem:oripath} For $n\geq 2$, an oriented path $\overrightarrow{P_n}$ satisfies one of the following. 
\begin{enumerate} 
\item If both end vertices are sinks, then the number of sources is one less than the number of sinks.
\item If both end vertices are sources, then the number of sinks is one less than the number of sources.
\item If one end vertex is a sink and the other a source,  then the number of sinks equals the number of sources. In particular, if the path has only one sink and one source then it is unidirectional.
 \end{enumerate} 
 \end{remark} 
 

It is obvious that the largest distance in an oriented path of order $n$ is $n-1$, which only happens when its orientation is unidirectional and its diameter is $n-1$. Thus, the maximum value in the distance set $D$ for $D$-antimagic oriented path of order $n$ is $n-1$. In the next lemma, we study the minimum value in $D$.
\begin{lemma} \label{lem:minD}
Let $n\geq 3$. If an oriented path $\overrightarrow{P_n}$ is $D$-antimagic then $\min{(D)}\le 1$.
\end{lemma}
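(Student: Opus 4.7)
The plan is to prove the contrapositive: assuming $\min(D) \ge 2$ (equivalently $0, 1 \notin D$), I will exhibit two distinct vertices with equal $D$-weight. The key observation is that when $D \subseteq \{2, 3, \ldots, \partial\}$, any vertex $v$ whose \emph{forward reach} $\max\{d(v,u) : d(v,u)<\infty\}$ is at most $1$ satisfies $N_D(v) = \emptyset$, and hence $\omega_D(v) = 0$. So it suffices to produce two such short-reach vertices in every oriented path of order $n \ge 3$.

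The production is driven by a case split supplied by Lemma \ref{lem:sinksource} and Remark \ref{rem:oripath}. If $\overrightarrow{P_n}$ has two or more sinks, then each sink trivially has empty $D$-neighborhood (no outgoing walks at all) and therefore $D$-weight $0$, yielding the desired pair immediately. Otherwise $\overrightarrow{P_n}$ has exactly one sink $s$, and Remark \ref{rem:oripath} forces one of two shapes: either (i) $\overrightarrow{P_n}$ is unidirectional (the sink sits at an endpoint), or (ii) both endpoints are sources and $s$ is internal. In case (i), the unique in-neighbor of $s$ has forward reach $1$ (its only out-neighbor is the sink), pairing with $s$ to give two weight-$0$ vertices. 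In case (ii), tracing edges from each source endpoint forces the orientation $v_1 \to \cdots \to v_k \leftarrow \cdots \leftarrow v_n$ with $s = v_k$, so that both $v_{k-1}$ and $v_{k+1}$ have $s$ as their only out-neighbor and thus forward reach $1$, again producing the required pair.

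The main obstacle is the structural argument in case (ii): one must verify that each sub-path between a source endpoint and the interior sink is forced to be unidirectional. This is where the fact that every non-sink, non-source vertex in an oriented path has in-degree and out-degree both equal to $1$ does the real work, propagating the orientation step by step from each source endpoint into the sink. Once the forced orientation is in hand, exhibiting two vertices of $D$-weight $0$ is immediate, and the whole argument collapses to the case analysis above.
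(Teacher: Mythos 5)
Your proposal is correct and follows essentially the same route as the paper: assume $\min(D)\ge 2$, then split on the number of sinks (two or more sinks give two zero-weight vertices; a unique sink at an endpoint forces the unidirectional orientation and pairs the sink with its in-neighbor; a unique internal sink pairs with its two in-neighbors), with every relevant vertex having empty $D$-neighborhood. Your explicit verification that the in-neighbors of an internal sink have the sink as their only out-neighbor is a detail the paper leaves implicit, but the argument is the same.
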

\begin{proof}
Assume that $min(D) \ge 2$ and let $D \subset\{0, 1, 2, \ldots, n-1\}$ with $\min(D) \ge 2$. By Lemma \ref{lem:sinksource}, $\overrightarrow{P_n}$ has at least one sink, thus we observe two cases based on the number of sinks. 

\begin{description}
    \item[Case 1 ($\overrightarrow{P_n}$ has more than one sink).] Here, at least two sinks have empty $D$-neighbourhoods, which results in zero $D$-weights of both sinks under any vertex labeling. 
    \item[Case 2 ($\overrightarrow{P_n}$ has exactly one sink).] We shall consider two subcases based on the location of the sink.
    \begin{description}
        \item[Case 2.1 (The sink is a leaf).] Here, the orientation is unidirectional. Since $\min(D)\ge 2$, the sink and its in-neighbor have empty $D$-neighborhoods, thus the same $D$-weights. 
        \item[Case 2.2 (The sink is an internal vertex).] Consider the sink and its two in-neighbors. Since $min(D) \ge 2$, these three vertices have empty $D$-neighborhoods, thus the same $D$-weights. 
    \end{description}
\end{description}
All the cases considered lead to contradictions, and so $\min{(D)}\le 1$.
\end{proof}

In the next theorem, we show that the necessary condition for the existence of $D$-antimagic paths in Lemma \ref{lem:minD} is sufficient if the orientation is unidirectional.

\begin{theorem}\label{th:uni->minD<=1}
Let $n\geq 3$ and $D$ be a distance set with $\min{(D)}\le 1$. If $\overrightarrow{P_n}$ is unidirectional, then $\overrightarrow{P_n}$ is $D$-antimagic.  
\end{theorem}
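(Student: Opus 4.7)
My plan is to exhibit the reverse labeling $f(v_i) = n+1-i$ on the unidirectional path $v_1 \to v_2 \to \cdots \to v_n$ and verify directly that it is $D$-antimagic. Since $d(v_i, v_j) = j - i$ when $j \geq i$ and $\infty$ otherwise, I have $N_D(v_i) = \{v_{i+t} : t \in D,\ i+t \leq n\}$. Writing $D = \{d_1 < d_2 < \cdots < d_k\}$, the hypothesis becomes $d_1 \in \{0, 1\}$.

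The next step is to partition the vertex set by the size of the effective distance set $D_i := \{t \in D : i + t \leq n\}$. Letting $I_j := \{i : |D_i| = j\}$, a direct computation yields $I_j = \{n - d_{j+1} + 1, \ldots, n - d_j\}$ for $1 \leq j \leq k-1$, $I_k = \{1, \ldots, n - d_k\}$, and $I_0 = \{n - d_1 + 1, \ldots, n\}$; the hypothesis $d_1 \leq 1$ is exactly what forces $|I_0| \leq 1$. On $I_j$ (for $j \geq 1$) one has $D_i = \{d_1, \ldots, d_j\}$, so the reverse labeling yields
\[
\omega_D(v_i) = \sum_{t \in D_i}(n+1-i-t) = j(n+1-i) - \sigma_j, \qquad \sigma_j := d_1 + d_2 + \cdots + d_j,
\]
which is a strictly decreasing arithmetic progression in $i$ with step $-j$, so its values on each $I_j$ are pairwise distinct. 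In the sub-case $d_1 = 1$, the lone vertex $v_n \in I_0$ has weight $0$.

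The remaining step, which I expect to be the main technical obstacle, is to show cross-class separation: for $1 \leq j_2 < j_1 \leq k$,
\[
\min_{i \in I_{j_1}} \omega_D(v_i) \;=\; j_1(d_{j_1}+1) - \sigma_{j_1} \;>\; j_2\, d_{j_2+1} - \sigma_{j_2} \;=\; \max_{i \in I_{j_2}} \omega_D(v_i).
\]
Using $\sigma_{j_1} - \sigma_{j_2} = \sum_{l = j_2+1}^{j_1} d_l \leq (j_1 - j_2)\, d_{j_1}$, the required inequality reduces to $j_1 > j_2(d_{j_2+1} - d_{j_1})$, which holds because $j_2 + 1 \leq j_1$ forces $d_{j_2+1} \leq d_{j_1}$, making the right-hand side nonpositive. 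The trivial bound $\omega_D(v_n) = 0 < 1 = \min_{i \in I_1}\omega_D(v_i)$ then disposes of the $I_0$-contribution when $d_1 = 1$. Combining these observations shows all $n$ weights are distinct, so $f$ is $D$-antimagic. The whole argument is essentially bookkeeping around the monotonicity of $d_1 < d_2 < \cdots < d_k$; no case split on the structure of $D$ is needed beyond the binary distinction $d_1 = 0$ versus $d_1 = 1$.
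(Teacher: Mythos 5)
Your proposal is correct and takes essentially the same approach as the paper: the identical reverse labeling $f(v_i)=n+1-i$, the identical partition of the vertices into the classes $I_j=\{n-d_{j+1}+1,\dots,n-d_j\}$ according to how many distances in $D$ are still effective, and the identical weight formula $j(n+1-i)-\sigma_j$. The only difference is that you explicitly verify the cross-class inequality $\min_{I_{j_1}}\omega_D>\max_{I_{j_2}}\omega_D$, which the paper asserts as the statement that the weights form a strictly monotone sequence in $i$ without writing out the boundary comparison.
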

\begin{proof} Let $V(\overrightarrow{P_n})=\{v_1,v_2, \ldots, v_n\}$, $A(\overrightarrow{P_n})=\{(v_i,v_{i+1})|1\le i\le n-1\}$, and distance set $D=\{d_0, d_1,d_2,\ldots, d_k\}$, where $0\le d_0<d_1<d_2<\ldots<d_k\le n-1$. Define a bijection $g:V(\overrightarrow{P_n})\rightarrow\{1, 2, \ldots, n\}$, with $g(v_i)=n-i+1$, for $i=1, 2, \ldots, n$. We shall prove that $g$ is $D$-antimagic, by considering two cases based on  $\min{(D)}=d_0$.
\begin{description}
    \item[Case 1 ($d_0=0$)] Here, the $D$-neighborhoods and $D$-weights of the vertices in $\overrightarrow{P_n}$ are as follows.\\ 
    \textbf{For $1 \le i \le n-d_k$,} $N_D(v_i) = \{v_{i+d_t}|0 \le t \le k\}$ and $\omega_D(v_i)
    =(k+1)(n-i+1)-\sum_{t=0}^kd_t$,\\
    \textbf{For $n-d_{t_0+1}+1 \le i \le n-d_{t_0}, t_0 \in [1,k-1]$,}   $N_D(v_i) = \{v_{i+d_t}|0 \le t \le t_0\}$ and $\omega_D(v_i)
    =(t_0+1)(n-i+1)-\sum_{t=0}^{t_0} d_t$,\\
    \textbf{For $n-d_1+1\le i \le n$,} $N_D(v_i) = \{v_i\}$ and $\omega_D(v_i)=n-i+1$.\\
    Thus, the $D$-weights assemble a strictly decreasing sequence, that is, $\omega_D(v_i)>\omega_D(v_{i+1})$, for $1\le i\le n-1$.
    \item[Case 2 ($d_0=1$)]  For $D=\{1\}$, $N_{\{1\}}(v_n)=\emptyset$ and $N_{\{1\}}(v_i)=\{v_i+1\}$, for $1\le i\le n-1$. And so, all the $\{1\}$-weights are different, where $\omega_{\{1\}}(v_n)<\omega_{\{1\}}(v_i)<\omega_{\{1\}}(v_j)$, for $i<j<n$.\\ 
    For $|D|\ge 2$, the $D$-neighborhoods and $D$-weights of the vertices in $\overrightarrow{P_n}$ are as follows.\\
    \textbf{For $1 \le i \le n-d_k$,} $N_D(v_i) = \{v_{i+d_t}|0 \le t \le d_k\}$ and $\omega_D(v_i)
 =(k+1)(n-i+1)-\sum_{t=0}^kd_t$. \\ 
    \textbf{For $n-d_{t_0+1}+1\le i \le n-d_{t_0}, t_0 \in [1,k-1]$,} $N_D(v_i) = \{v_{i+d_t}|0 \le t \le t_0\}$ and $\omega_D(v_i)
 =(t_0+1)(n-i+1)-\sum_{t=0}^{t_0} d_t$.\\  
    \textbf{For $n-d_1+1\le i \le n$,} $N_D(v_i) = \{v_{i+1}\}$ and $N_D(v_n) = \emptyset$ and $\omega_D(v_i)
 =n-i$ and $\omega_D(v_n)=0$.\\
 Thus, $\omega_D(v_i)<\omega_D(v_{i+1})$, for $1\le i\le n-1$. 
 \end{description}

\end{proof}

In the next theorem, we characterize $D$-antimagic oriented paths where $\min(D)=1$.

\begin{theorem}\label{th:minD=1}
Let $n\ge3$ and $D$ be a distance set with $\min(D)=1$. Then $\overrightarrow{P_n}$ is $D$-antimagic if and only if $\overrightarrow{P_n}$ is unidirectional.
\end{theorem}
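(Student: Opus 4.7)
The plan is to prove the two directions separately. The backward direction (unidirectionality implies $D$-antimagic) is immediate from Theorem \ref{th:uni->minD<=1}, since the hypothesis $\min(D)\le 1$ there is satisfied by $\min(D)=1$. So the real work is the forward direction, which I would tackle via its contrapositive: assuming $\overrightarrow{P_n}$ is not unidirectional, I want to exhibit two distinct vertices whose $D$-neighborhoods coincide under every bijection, which forces their $D$-weights to agree and rules out any $D$-antimagic labeling.

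To organize the non-unidirectional case, I would apply Remark \ref{rem:oripath} and split on the number of sinks. If $\overrightarrow{P_n}$ has at least two sinks $s_1$ and $s_2$, the fact that $\min(D)=1$ forces $0\notin D$ means that $N_D(s_i)$ consists only of vertices reachable from $s_i$ by a directed path of positive length; since a sink has no out-neighbors, $N_D(s_1)=N_D(s_2)=\emptyset$ and both $D$-weights equal $0$. This subsumes Case 1 of Remark \ref{rem:oripath}, Case 3 (for which non-unidirectionality forces sinks equal to sources with both at least two), and the subcase of Case 2 with three or more sources.

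The remaining scenario is Case 2 with exactly two sources (the two endpoints) and a single internal sink $v_k$. A short check based on tracking the arrow pattern along the path shows that $\overrightarrow{P_n}$ must take the form $v_1\to\cdots\to v_k\leftarrow\cdots\leftarrow v_n$ with $2\le k\le n-1$, because having no internal source forbids any second change in arrow direction. In particular, the two neighbors $v_{k-1}$ and $v_{k+1}$ of $v_k$ both have $v_k$ as their unique out-neighbor, and since $v_k$ is a sink, neither can reach any further vertex. Because $\min(D)=1$ gives $1\in D$ and $0\notin D$, this yields $N_D(v_{k-1})=N_D(v_{k+1})=\{v_k\}$, and hence $\omega_D(v_{k-1})=\omega_D(v_{k+1})=f(v_k)$ for every bijection $f$, again preventing $D$-antimagic.

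The main obstacle I anticipate is purely the bookkeeping needed to pin down the shape of the path in Case 2 with one sink, i.e., to rule out the sink being a leaf or being flanked by additional direction reversals. This reduces to a short combinatorial observation about how sinks and sources arise from changes of arrow direction along an oriented path, and once it is in place the rest of the argument is essentially immediate from the two remarks that a sink has empty $D$-neighborhood and that the two in-neighbors of an isolated internal sink are forced twins whose only reachable vertex is the sink itself.
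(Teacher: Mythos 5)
Your proposal is correct and follows essentially the same route as the paper's proof: both reduce the non-unidirectional case to the two observations that any two sinks have empty $D$-neighborhoods (hence equal zero weights), and that when the only sink is internal its two in-neighbors have $D$-neighborhood $\{v_k\}$ (hence equal weights), with the converse supplied by Theorem \ref{th:uni->minD<=1}. The only difference is cosmetic: you organize the cases by the number of sinks, while the paper organizes them by the three endpoint configurations of Remark \ref{rem:oripath}.
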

\begin{proof} Let $\overrightarrow{P_n}$ be a $D$-antimagic graph. Assume that $\overrightarrow{P_n}$ is not unidirectional and consider the three cases in Remark \ref{rem:oripath}. 
\begin{description}
    \item[Case 1 (Both end vertices are sinks)] The two sinks have zero $D$-weights.
    \item[Case 2 (Both end vertices are sources)] Consider two subcases based on the number of sinks. 
    \begin{description}
        \item[Case 2.1 ($\overrightarrow{P_n}$ has exactly one sink)] The two in-neighbors of the sink have the same $D$-neighborhood, that is the set contains only the sink.
        \item[Case 2.1 ($\overrightarrow{P_n}$ has two or more sinks)] Both sinks have zero $D$-weights.
    \end{description}
    \item[Case 3 (The two end vertices are a pair of one sink and one source)] Since $P_n$ is not unidirectional, there are at least two sinks of zero $D$-weights.
\end{description}
All these cases lead to contradictions. Thus, $\overrightarrow{P_n}$ is unidirectional. Conversely, Theorem \ref{th:uni->minD<=1} completes the proof.\end{proof}

To conclude this section, we investigate $D$-antimagic labelings of paths where the set of distances $D$ contains the two largest distances, that is, $n-1$ and $n-2$. 
Note that, $n-1$ is in $D$ only if the orientation is unidirectional. This and Theorem \ref{th:uni->minD<=1} lead to the following characterization.
\begin{theorem} Let $n\ge 3$ and $n-1\in D$.
    $\overrightarrow{P_n}$ is $D$-antimagic if only if $\overrightarrow{P_n}$ is unidirectional.
\end{theorem}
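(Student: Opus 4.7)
The plan is to prove the biconditional by splitting into two directions. The ``only if'' direction follows from a structural observation about the diameter of oriented paths: the hypothesis $n-1\in D$, combined with the convention $D\subseteq\{0,1,\dots,\partial\}$, forces the orientation to be unidirectional. The ``if'' direction is then an immediate consequence of Theorem \ref{th:uni->minD<=1}.

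For the forward direction, I would argue as follows. Suppose $\overrightarrow{P_n}$ is $D$-antimagic with $n-1\in D$. By the definition of a distance set, $D\subseteq\{0,1,\dots,\partial\}$, so $\partial\ge n-1$. On the other hand, $\overrightarrow{P_n}$ contains exactly $n-1$ arcs, so no directed path can have length exceeding $n-1$, giving $\partial\le n-1$. Combining these bounds yields $\partial=n-1$, which means there exist vertices $u,v\in V(\overrightarrow{P_n})$ with $d(u,v)=n-1$. A directed $u$-$v$ path of length $n-1$ must traverse every vertex and every arc of $\overrightarrow{P_n}$, so all arcs must be oriented consistently along the underlying path $P_n$; hence $\overrightarrow{P_n}$ is unidirectional.

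For the converse, assume $\overrightarrow{P_n}$ is unidirectional. Then Theorem \ref{th:uni->minD<=1} directly yields that $\overrightarrow{P_n}$ is $D$-antimagic (in the relevant regime where $\min(D)\le 1$), completing the characterization. The main (and essentially only) obstacle is conceptual rather than technical: one must recognize that the condition ``$n-1\in D$'' already encodes a diameter constraint via the standing convention on the distance set, and that this constraint is strong enough to fix the underlying orientation. Once this observation is made, no case analysis or delicate calculation is required, and the entire proof reduces to invoking the earlier result for unidirectional paths.
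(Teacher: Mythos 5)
Your forward direction is exactly the paper's (unstated) argument, written out in full: since $D\subseteq\{0,1,\dots,\partial\}$ and $\partial\le n-1$ for any orientation of $P_n$, the hypothesis $n-1\in D$ forces $\partial=n-1$, hence a directed path through all $n$ vertices and all $n-1$ arcs, hence a unidirectional orientation. This matches the paper, which disposes of that direction with the single remark that ``$n-1$ is in $D$ only if the orientation is unidirectional,'' and is correct.

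The problem is the converse, and your own parenthetical ``(in the relevant regime where $\min(D)\le 1$)'' points at the gap without closing it. Theorem \ref{th:uni->minD<=1} applies only when $\min(D)\le 1$, and $n-1\in D$ does not guarantee this. If $\min(D)\ge 2$ --- for instance $D=\{n-1\}$ itself --- then by Lemma \ref{lem:minD} no orientation of $P_n$ with $n\ge 3$ is $D$-antimagic; concretely, on the unidirectional $\overrightarrow{P_n}$ with $D=\{n-1\}$ every vertex except the source has empty $D$-neighborhood, so at least two vertices have $D$-weight $0$. Thus the ``if'' direction fails as stated, and the characterization is only true under the additional hypothesis $\min(D)\le 1$ (equivalently, one must exclude or separately handle distance sets with $\min(D)\ge 2$). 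To be fair, the paper's own one-line justification commits the identical omission, so your proposal is a faithful reconstruction of the paper's route; but a complete proof must either add the hypothesis $\min(D)\le 1$ to the statement or explicitly dispose of the case $\min(D)\ge 2$ via Lemma \ref{lem:minD}.
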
  

For $D$ containing $n-2$, we define the following two orientations $\Theta'$ and $\Theta''$ (see Figure \ref{fig:theta}).
\begin{align*}
    \Theta'&:=\{(v_2,v_1)\}\cup\{(v_i,v_{i+1})|2\le i\le n-1\},\\
    \Theta''&:=\{(v_1,v_2)\}\cup\{(v_{i+1},v_{i})|2\le i\le n-1\}.
\end{align*} 
\begin{figure}[h!]
    \centering
    \includegraphics[scale=.45]{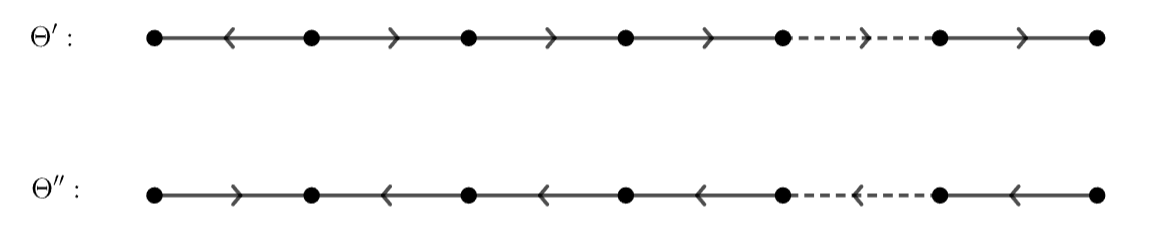}
    \caption{The $\Theta'$ and $\Theta''$ orientations on a path.}
    \label{fig:theta}
\end{figure}

These two orientations, together with the unidirectional orientation, are the only orientations for a path containing two vertices of distance $n-2$. 
\begin{lemma} \label{lem:n-2} Let $n\ge 3$. If $\overrightarrow{P_n}$ has two vertices of distance $n-2$, then the orientation of $\overrightarrow{P_n}$ is either unidirectional, $\Theta'$, or $\Theta''$. 
\end{lemma}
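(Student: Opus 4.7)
The plan is to convert the distance hypothesis into a constraint on the edge orientations of the underlying path and then enumerate the resulting possibilities.

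First, I would note that if two vertices $u,w$ of $\overrightarrow{P_n}$ satisfy $d(u,w)=n-2$, then $\overrightarrow{P_n}$ contains a directed sub-path of length $n-2$. Writing the edges of the underlying path as $e_i=v_iv_{i+1}$ for $1\le i\le n-1$, such a directed sub-path consists of $n-2$ consecutive edges of $P_n$ all oriented in a common direction. The only two blocks of $n-2$ consecutive edges in $P_n$ are $B_1=\{e_1,\ldots,e_{n-2}\}$ and $B_2=\{e_2,\ldots,e_{n-1}\}$, so one of these blocks must be uniformly oriented. Using the path automorphism $v_i\leftrightarrow v_{n+1-i}$, which is a distance-preserving relabeling that swaps $B_1$ and $B_2$, I may assume without loss of generality that the consistently oriented block is $B_2$, leaving $e_1$ as the only edge whose orientation is not yet fixed.

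Next, I would enumerate the four cases determined by the common direction of $B_2$ (``forward'' meaning $v_i\to v_{i+1}$ versus ``backward'' meaning $v_{i+1}\to v_i$) and the direction of $e_1$. When $e_1$ agrees with $B_2$, every edge is oriented the same way and the orientation is unidirectional. When $B_2$ is forward and $e_1$ is backward, reading off the arc set yields $\{(v_2,v_1)\}\cup\{(v_i,v_{i+1})\colon 2\le i\le n-1\}$, which is precisely $\Theta'$. When $B_2$ is backward and $e_1$ is forward, the arc set is $\{(v_1,v_2)\}\cup\{(v_{i+1},v_i)\colon 2\le i\le n-1\}$, which is precisely $\Theta''$. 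These three outcomes exhaust the possibilities, giving the conclusion.

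The main obstacle is the symmetry reduction at the outset. I must be explicit that reflecting the vertex labels via $v_i\leftrightarrow v_{n+1-i}$ is legitimate, since the conclusion names orientations by the labels $v_1,\ldots,v_n$; the reflection is a graph automorphism of $P_n$ that sends any oriented path satisfying the hypothesis to another such oriented path (with the roles of $B_1$ and $B_2$ exchanged). Once this reduction is in place, the remaining four-way case analysis is routine, essentially amounting to just reading off arc sets.
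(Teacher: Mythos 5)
The paper states Lemma~\ref{lem:n-2} without any proof, so there is no argument of the authors' to compare yours against; your write-up supplies the missing justification, and its core is sound: a finite distance $n-2$ in an oriented tree forces the unique underlying $(n-2)$-edge segment between the two vertices to be consistently oriented, the only such segments are $B_1=\{e_1,\dots,e_{n-2}\}$ and $B_2=\{e_2,\dots,e_{n-1}\}$, and the four-way case check on the remaining edge is correct.

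The one point you should tighten is the reduction by the reflection $v_i\leftrightarrow v_{n+1-i}$. The conclusion of the lemma names $\Theta'$ and $\Theta''$ as explicit arc sets on the labelled vertices, and the set $\{\text{unidirectional},\Theta',\Theta''\}$ is \emph{not} closed under this reflection: for $n\ge 4$ the image of $\Theta'$ is $\{(v_{n-1},v_n)\}\cup\{(v_{j+1},v_j)\mid 1\le j\le n-2\}$, i.e.\ $B_1$ uniformly reversed with $e_{n-1}$ deviating, which is none of the three named orientations as a labelled arc set. Concretely, for $n=5$ the orientation $(v_2,v_1),(v_3,v_2),(v_4,v_3),(v_4,v_5)$ has $d(v_4,v_1)=3=n-2$ yet literally equals neither $\Theta'$ nor $\Theta''$ nor a unidirectional orientation. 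So the lemma is only true, and your WLOG is only legitimate, if ``the orientation is $\Theta'$ or $\Theta''$'' is read up to the relabelling $v_i\mapsto v_{n+1-i}$ (equivalently, up to isomorphism of oriented paths). That is surely the intended reading, and it is harmless for the way the paper uses the lemma, since $D$-antimagicness is invariant under isomorphism; but your proof should say explicitly that the case ``$B_1$ is the uniform block'' yields the mirror images of the three orientations, which are then identified with unidirectional, $\Theta'$, $\Theta''$ under the relabelling, rather than asserting that the reflection preserves the stated conclusion verbatim.
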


We shall use Lemma \ref{lem:n-2} to characterize $D$-antimagic paths when $D$ contains $0$ and $n-2$.
\begin{theorem} Let $n\ge 3$. If $\{0,n-2\}\subseteq D\subseteq \{0,1,2,\dots,n-2\}$, then $\overrightarrow{P_n}$ with $\Theta'$ orientation is $D$-antimagic.
\end{theorem}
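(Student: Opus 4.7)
The plan is to exhibit the labeling $f(v_1) = 1$ and $f(v_i) = n - i + 2$ for $2 \le i \le n$ (so $v_1$ receives the smallest label, $v_2$ the maximum label $n$, and labels decrease along $v_2, v_3, \ldots, v_n$), and then verify directly that all $n$ weights are pairwise distinct.

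First I would read off the $D$-neighborhoods under $\Theta'$. Since $v_1$ is a sink reachable only from $v_2$, while $v_2 \to v_3 \to \cdots \to v_n$ is unidirectional, one has $d(v_1,v_j) = \infty$ for $j \ne 1$, $d(v_2,v_1) = 1$, and $d(v_i,v_j) = j - i$ for $2 \le i \le j$. Writing $D_i := \{d \in D : d \le n-i\}$, this gives $N_D(v_1) = \{v_1\}$ (using $0 \in D$), $N_D(v_i) = \{v_{i+d} : d \in D_i\}$ for $3 \le i \le n$, and $N_D(v_2) = \{v_{2+d} : d \in D\}$, augmented by $v_1$ exactly when $1 \in D$.

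Next I would compute the weights and establish strict monotonicity on the tail. Immediately $\omega_D(v_1) = 1$ and $\omega_D(v_n) = 2$, while for $3 \le i \le n$ one obtains $\omega_D(v_i) = |D_i|(n-i+2) - \sum_{d \in D_i} d$. A straightforward subtraction yields, for $3 \le i' < i \le n$,
\[
\omega_D(v_{i'}) - \omega_D(v_i) = |D_i|(i-i') + \sum_{d \in D_{i'} \setminus D_i}\bigl(n-i'-d+2\bigr),
\]
which is strictly positive because $0 \in D_i$ forces $|D_i| \ge 1$ and each summand of the second term is at least $2$. Hence $\omega_D(v_3) > \omega_D(v_4) > \cdots > \omega_D(v_n) = 2 > 1 = \omega_D(v_1)$.

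Finally, for the boundary vertex $v_2$, I would compute $\omega_D(v_2) = |D|\,n - \sum_{d \in D} d + \varepsilon$, where $\varepsilon = 1$ if $1 \in D$ and $0$ otherwise. Using $n-2 \in D$ so that $D_3 = D \setminus \{n-2\}$, a short algebraic simplification gives
\[
\omega_D(v_2) - \omega_D(v_3) = |D| + 1 + \varepsilon \;\ge\; 3,
\]
placing $\omega_D(v_2)$ strictly above $\omega_D(v_3)$, and therefore above every other weight. The main obstacle is choosing the right labeling: because $v_1$ is an isolated sink whose $D$-weight is just $f(v_1)$, giving it the \emph{smallest} label (the reverse of the scheme used for the unidirectional case in Theorem \ref{th:uni->minD<=1}) is what isolates $\omega_D(v_1)$ at the bottom of the weight spectrum. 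Once this reversal is made, the rest reduces to the telescoping estimate on $v_3,\ldots,v_n$ and the one-line comparison that $v_2$ dominates.
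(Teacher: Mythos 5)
Your proposal is correct and follows essentially the same route as the paper: the identical labeling $f(v_1)=1$, $f(v_i)=n-i+2$ for $2\le i\le n$, followed by a direct computation of the $D$-neighborhoods and weights showing they form the strict chain $\omega_D(v_1)<\omega_D(v_n)<\cdots<\omega_D(v_3)<\omega_D(v_2)$. The only difference is presentational: where the paper splits into subcases ($D=\{0,n-2\}$ versus $D\supset\{0,n-2\}$, and $d_1=1$ versus $d_1\ge 2$), you absorb these into the uniform notation $D_i$ and the indicator $\varepsilon$, which streamlines but does not change the argument.
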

\begin{proof}
Define a bijection $g_1:V(\overrightarrow{P_n})\rightarrow \{1,2,\dots,n\}$ such that $g(v_1)=1$ and $g(v_i)=n-i+2$, for $2\le i\le n$. We will show that $g$ is a $D$-antimagic labeling by considering two cases. 
\begin{description}
\item[Case 1 ($D=\{0,n-2\}$)] Based on $\Theta'$ orientation, the $D$-neighborhood and $D$-weight for each vertex is:
    \begin{description}
        \item[For $i=1$:] $N_D(v_1)=\{v_1\}$ and $\omega_D(v_1)=1$.
        \item[For $i=2$:] If $n=3$, $N_D(v_2)=\{v_1 ,v_2,v_3\}$ and $\omega_D(v_2)=6$. If $n\ge 4$, $N_D(v_2)=\{v_2,v_n\}$ and $\omega_D(v_2)=n+2$.
        \item[For $i=3,4,\dots,n$:] $N_D(v_i)=\{v_i\}$ and $\omega_D(v_i)=n-i+2$.    
    \end{description}
    Thus we have $\omega_D(v_1)<\omega_D(v_{i})<\omega_D(v_j)$, for $2\le i<j\le n$.
\item[Case 2 ($D\supset\{0,n-2\}$)] Here, $n\ge 4$.  Let $D=\{d_0,d_1,d_2,\dots,d_k\}$ be a distance set with $0=d_0<d_1<d_2<\dots<d_k=n-2$. 
\begin{description}
    \item[Case 2.1 ($d_1=1$).] We obtained the $D$-neighborhood and  $D$-weight for each vertex as follows:
    \begin{description}
        \item[For $i=1$:] $N_D(v_1)=\{v_1\}$ and $\omega_D(v_1)=1$.
        \item[For $i=2$:] $N_D(v_2)=\{v_1\}\cup \bigcup_{t=0}^{k}\{v_{d_t+2}\}$ and $\omega_D(v_2)=1+n(k+1)-\sum_{t=1}^{k}d_t$.
        \item[For $n-d_{t_0+1}+1\le i\le n-d_{t_0}\text{ and } 0\le t_0\le k-1$:] $N_D(v_i)=\bigcup_{t=0}^{t_0}\{v_{d_t+i}\}$ and 
        $\omega_D(v_i)=(n-i+2)(t_0+1)-\sum_{t=1}^{t_0}d_t$.
    \end{description} 
    Thus, $\omega_D(v_1)<\omega_D(v_n)<\omega_D(v_{n-1})<\dots<\omega_D(v_3)<\omega_D(v_2)$.

   \item[Case 2.2 ($d_1\ge 2$).]
   The $D$-neighborhood and $D$-weight for each vertex is: 
   \begin{description}
       \item[For $i=1$:] $N_D(v_1)=\{v_1\}$ and $\omega_D(v_1)=1$.
        \item[For $i=2$:] $N_D(v_2)=\bigcup_{t=0}^{k}\{v_{d_t+2}\}$ and $\omega_D(v_2)=n(k+1)-\sum_{t=1}^{k}d_t$.
        \item[For $n-d_{t_0+1}+1\le i\le n-d_{t_0},0\leq t_0 \leq k-1$:] $N_D(v_i)=\bigcup_{t=0}^{t_0}\{v_{d_t+i}\}$ and $\omega_D(v_i)=(n-i+2)(t_0+1)-\sum_{t=1}^{t_0}d_t$.
   \end{description} 
   And so $\omega_D(v_1)<\omega_D(v_n)<\omega_D(v_{n-1})<\dots<\omega_D(v_3)<\omega_D(v_2)$.
\end{description}\end{description}
In all the cases, the $D$-weight of each vertex is unique, and this concludes the proof. 
\end{proof}

  
\begin{theorem} Let $n\ge 3$. If $\{0,n-2\}\subseteq D\subseteq \{0,1,2,\dots,n-2\}$, then $\overrightarrow{P_n}$ with orientation $\Theta''$ is $D$-antimagic. 
\end{theorem}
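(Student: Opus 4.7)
The plan is to exhibit an explicit $D$-antimagic labeling of $\overrightarrow{P_n}$ under $\Theta''$, namely the identity bijection $g_2:V(\overrightarrow{P_n})\to\{1,2,\ldots,n\}$ defined by $g_2(v_i)=i$ for every $i$. Under $\Theta''$ the vertex $v_2$ is the unique sink, $v_1$ reaches only $v_2$ at distance $1$, and for $3\le i\le n$ the vertex $v_i$ reaches precisely $v_{i-1},v_{i-2},\ldots,v_2$ at distances $1,2,\ldots,i-2$ respectively. Consequently $N_D(v_2)=\{v_2\}$; $N_D(v_1)=\{v_1\}$ if $1\notin D$ and $N_D(v_1)=\{v_1,v_2\}$ if $1\in D$; and $N_D(v_i)=\{v_{i-t}:t\in D,\ 0\le t\le i-2\}$ for $3\le i\le n$.

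With the identity labeling, the single formula $\omega_D(v_i)=\sum_{t\in D,\, t\le i-2}(i-t)$ is valid for every $i\ge 2$, while $\omega_D(v_1)=1$ when $1\notin D$ and $\omega_D(v_1)=3$ when $1\in D$. The next step is to prove that the tail $\omega_D(v_2)<\omega_D(v_3)<\cdots<\omega_D(v_n)$ is strictly increasing. A direct telescoping computation gives
\[\omega_D(v_{i+1})-\omega_D(v_i)=|D\cap\{0,1,\ldots,i-2\}|+\varepsilon_i,\]
where $\varepsilon_i=2$ if $i-1\in D$ and $\varepsilon_i=0$ otherwise. Since $0\in D$, the cardinality on the right is at least $1$ for every $i\ge 2$, so each difference is strictly positive.

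Finally, I would slot $\omega_D(v_1)$ into the existing ordering. If $1\notin D$ then $\omega_D(v_1)=1<2=\omega_D(v_2)$, so $v_1$ carries the strictly smallest weight. If $1\in D$ then $\omega_D(v_1)=3$, whereas the recurrence above gives $\omega_D(v_2)=2$ and $\omega_D(v_3)=5$; by the monotonicity $\omega_D(v_i)\ge 5>3$ for every $i\ge 3$, so $\omega_D(v_1)$ slots strictly between $\omega_D(v_2)$ and $\omega_D(v_3)$ and coincides with no other weight. In both cases all $n$ weights are distinct, confirming that $g_2$ is $D$-antimagic.

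I do not foresee any deep obstruction. The step that most requires care is verifying the telescoping identity for $\omega_D(v_{i+1})-\omega_D(v_i)$, together with the two-case analysis ($1\in D$ versus $1\notin D$) needed to place $\omega_D(v_1)$ relative to the tail; everything else reduces to routine bookkeeping in the spirit of the preceding $\Theta'$ theorem.
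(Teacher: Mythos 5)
Your proposal is correct and uses the very same labeling as the paper (the identity bijection $f(v_i)=i$), with the same underlying plan of showing that the weights of $v_2,\dots,v_n$ form a strictly increasing sequence and then slotting $\omega_D(v_1)$ in separately according to whether $1\in D$. The only difference is presentational: the paper verifies distinctness by splitting into cases ($D=\{0,n-2\}$ versus larger $D$, and $d_1=1$ versus $d_1\ge 2$) and tabulating explicit weight formulas block by block, whereas your single telescoping identity $\omega_D(v_{i+1})-\omega_D(v_i)=|D\cap\{0,1,\dots,i-2\}|+\varepsilon_i$ handles all of these cases at once and is arguably cleaner.
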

\begin{proof}
Define a bijection $f:V(\overrightarrow{P_n})\rightarrow \{1,2,\dots,n\}$ by $f(v_i)=i$, for every $1\le i\le n$. We will show that $f$ is $D$-antimagic labeling on $\overrightarrow{P_n}$.
When $n=3$, $D={\{0,1\}}$, and $\omega_D(v_1)=3,\omega_D(v_2)=2,\omega_D(v_3)=5.$
For $n\ge4$, consider the following two cases: 
\begin{description}
    \item[Case 1 ($D=\{0,n-2\}$)] Based on $\Theta''$, the $D$-neighborhood and the $D$-weight for each vertex is:\\
    \textbf{For $1\le i\le n-1$:}
            $N_D(v_i)=\{v_i\}$ and $\omega_D(v_i)=i$.\\
    \textbf{For $i=n$:} $N_D(v_n)=\{v_2,v_n\}$ and $\omega_D(v_n)=n+2$.
   \item[Case 2 ($D\supset \{0,n-2\}$)] Let $D=\{d_0,d_1,d_2,\dots,d_k\}$, where $0=d_0<d_1<d_2<\dots<d_k=n-2$.
   \begin{description}
       \item[Case 2.1 ($d_1=1$)] Here is the $D$-neighborhood and distinct $D$-weight for each vertex in $\overrightarrow{P_n}$:
       \begin{description}
            \item[For $i=1$:] $N_D(v_1)=\{v_1,v_2\}$ and $ \omega_D(v_1)=3$.
            \item[For $i=2$:] $N_D(v_2)=\{v_2\}$ and $\omega_D(v_2)=2.$
            \item[For $3\le i\le d_2+1$:] $N_D(v_i)=\{v_i,v_{i-1}\}$ and $\omega_D(v_i)=2i-1 $.
            \item[For $d_{t_0-1}+2\le i\le d_{t_0}+1\text{ and }3\le t_0\le k$:] $N_D(v_i)=\bigcup_{t=0}^{t_0-1}\{v_{i-d_t}\}$ and $\omega_D(v_i)=it_0+\sum_{t=0}^{t_0-1}\left(d_t\right)$.
            \item[For $i=n$:] $N_D(v_n)=\bigcup_{t=0}^{k}\{v_{n-d_t}\}$ and $\omega_D(v_n)=\sum_{t=0}^{k}(n-d_t)$.
        \end{description}
      \item[Case 2.2 ($d_1\ge 2$).] Here is the $D$-neighborhood and the unique $D$-weight for each vertex.
      \begin{description}
            \item[For $1\le i\le d_1+1$:]$N_D(v_i)=\{v_i\}\text{ and }\omega_D(v_i)=i$.
            \item[For $d_{t_0-1}+2\le i\le d_{t_0}+1\text{ and }2\le t_0\le k$:] $N_D(v_i)=\bigcup_{t=0}^{t_0+1}\{v_{i-d_t}\}\text{ and }\omega_D(v_i)=it_0-\sum_{t=0}^{t_0-1}d_t$.
            \item[For $i=n$:]$N_D(v_n)=\bigcup_{t=0}^{k}\{v_{n-d_t}\}\text{ and }\omega_D(v_n)=\sum_{t=0}^{k}(n-d_t)=n(k+1)-\sum_{t=0}^{k}d_t$.
            \end{description}
  \end{description}
\end{description}
Since in all cases, the $D$-weight of each vertex is unique, then $f$ is $D$-antimagic. 
\end{proof}


The previous two theorems lead to the following characterization.
\begin{theorem}
Let $n\ge 3$ and $\{0,n-2\}\subseteq D\subseteq \{0,1,2,\dots,n-2\}$. Then, $\overrightarrow{P_n}$ is $D$-antimagic if and only if the orientation of $\overrightarrow{P_n}$ is either unidirectional, $\Theta'$, or $\Theta''$.
\end{theorem}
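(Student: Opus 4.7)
The proof splits into the standard two directions, both of which are short consequences of earlier results. For the ``if'' direction, the three listed orientations have already been handled individually: if $\overrightarrow{P_n}$ is unidirectional then $\min(D)=0\le 1$ (since $0\in D$), so Theorem \ref{th:uni->minD<=1} applies; if the orientation is $\Theta'$ or $\Theta''$, the two immediately preceding theorems supply explicit $D$-antimagic labelings. Assembling these three cases gives sufficiency.

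For the ``only if'' direction, I would argue as follows. Suppose $\overrightarrow{P_n}$ is $D$-antimagic with $\{0,n-2\}\subseteq D$. By the definition of a distance set, one has $D\subseteq\{0,1,\dots,\partial\}$, so the hypothesis $n-2\in D$ forces $\partial\ge n-2$. Since the largest possible finite distance in an oriented path of order $n$ is $n-1$, and that value is attained only by the unidirectional orientation (in which every distance $0,1,\dots,n-1$ occurs), $\partial$ must lie in $\{n-2,n-1\}$. In either case $\overrightarrow{P_n}$ contains a pair of vertices at distance exactly $n-2$: if $\partial=n-2$ this is immediate from the definition of $\partial$, and if $\partial=n-1$ the orientation is unidirectional and $v_1,v_{n-1}$ form such a pair. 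Lemma \ref{lem:n-2} then forces the orientation to be unidirectional, $\Theta'$, or $\Theta''$.

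The one subtle point, and really the only place where care is needed, is recognising that requiring $n-2\in D$ is not a vacuous algebraic condition but a structural one: it forces $\partial\ge n-2$ through the definition of the distance set and thereby excludes orientations whose finite distances are all strictly smaller (for instance, ``V''-shaped orientations whose only finite distances are $0$ and $1$). Once that observation is in place, the characterization is an immediate corollary of Lemma \ref{lem:n-2} together with the preceding three constructions, and no further technical obstacle arises.
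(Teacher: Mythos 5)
Your proposal is correct and follows exactly the route the paper intends: the paper states this theorem as an immediate consequence of the two preceding theorems (for $\Theta'$ and $\Theta''$), Theorem \ref{th:uni->minD<=1} (for the unidirectional case), and Lemma \ref{lem:n-2} together with the definitional requirement $D\subseteq\{0,1,\dots,\partial\}$ for the converse. Your explicit remark that $n-2\in D$ forces $\partial\ge n-2$, and hence the existence of a pair at distance exactly $n-2$, is precisely the implicit step the paper glosses over.
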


It is then natural to ask the characterization of $D$-antimagic paths, for other sets of $D$. 
\begin{problem}
    For any distance set $D$, find necessary and sufficient conditions for a path to admit a $D$-antimagic labeling.
\end{problem}

\section{Distance Antimagic Labelings on Oriented Trees} \label{sec:tree}


In this section, we characterize distance antimagic oriented trees. We start by observing the following properties of a tree.

\begin{observation}\label{ob:Tonesink} Let $T$ be a nontrivial oriented tree. Then,
    \begin{enumerate}
        \item $\overrightarrow{T}$ contains at least one sink.
        \item If $x$ is a vertex with the largest out-degree, then the number of sinks in $\overrightarrow{T}$ is at least $d^+(x)$.
    \end{enumerate}
\end{observation}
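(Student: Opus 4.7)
}

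For part (1), the natural approach is to exploit the acyclicity of an oriented tree and consider a longest directed path. Let $P\colon v_0, v_1, \dots, v_k$ be a directed path in $\overrightarrow{T}$ of maximum length. I claim $v_k$ is a sink. Otherwise $v_k$ has an out-neighbor $w$; since the underlying tree is acyclic, $w$ cannot already lie on $P$ (this would force a directed cycle), so $v_0, v_1, \dots, v_k, w$ is a strictly longer directed path, contradicting maximality. Because $\overrightarrow{T}$ is nontrivial, such a path of length at least $0$ exists, so a sink is produced.

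For part (2), let $x$ be a vertex of maximum out-degree and write its out-neighbors as $y_1, y_2, \dots, y_{d^+(x)}$. The plan is to exhibit one sink in each of the $d^+(x)$ ``branches'' hanging off $x$, using part (1). Concretely, for each $i$ let $T_i$ denote the connected component of the underlying tree $T-x$ that contains $y_i$. Since $T$ is a tree, the $T_i$'s are pairwise vertex-disjoint subtrees, each of which is nontrivial (or a single vertex $y_i$, which is itself a sink of $\overrightarrow{T}$ because its only incident arc is the in-arc $(x, y_i)$).

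Equip each $T_i$ with the orientation inherited from $\overrightarrow{T}$, giving an oriented tree $\overrightarrow{T_i}$. By part (1), $\overrightarrow{T_i}$ contains a sink $s_i$. The key observation is that $s_i$ is also a sink of $\overrightarrow{T}$: the only edge of $T$ incident to $T_i$ but not lying in $T_i$ is the edge $xy_i$, and this edge is oriented as $(x, y_i)$; hence if $s_i = y_i$, this edge is an in-arc at $s_i$, while if $s_i \neq y_i$, then $s_i$ is not adjacent to $x$ at all. In either case, $s_i$ has no out-neighbor in $\overrightarrow{T}$. Since $s_1, \dots, s_{d^+(x)}$ lie in disjoint subtrees, they are distinct, yielding the required lower bound of $d^+(x)$ sinks.

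The only delicate step is the verification that sinks of the subtrees $\overrightarrow{T_i}$ remain sinks of $\overrightarrow{T}$; this hinges on the fact that $x$'s edges into the branches are all oriented \emph{away} from $x$, so reattaching $x$ cannot introduce new out-arcs at any vertex of $T_i$. The rest is essentially routine application of part (1) to each branch.
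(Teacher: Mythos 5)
Your proof is correct and complete. The paper states this result as an observation without supplying any proof, so there is nothing to compare against; your argument (endpoint of a longest directed path is a sink, since an oriented tree is acyclic as a digraph, then one sink per branch of $T-x$ via the fact that each component of $T-x$ meets exactly one edge at $x$, oriented away from $x$) is the natural justification and handles the only delicate points correctly, including the single-vertex branch case.
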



In the next lemma, we provide a necessary condition for $D$-antimagic oriented trees.
\begin{theorem}\label{lem:Tisunipath}
     If a tree $\overrightarrow{T}$ is distance antimagic then $\overrightarrow{T}$ is a unidirectional path.
\end{theorem}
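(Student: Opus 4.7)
The plan is to deduce the structure of $\overrightarrow{T}$ in four steps, each eliminating another configuration until only the unidirectional path remains. Throughout, I will use the fact that distance antimagic is $D$-antimagic with $D=\{1\}$, so $N_{\{1\}}(v)$ is precisely the set of out-neighbors of $v$.

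First, I would bound the out-degrees. By Observation \ref{ob:Tonesink}(2), if some vertex of $\overrightarrow{T}$ has out-degree at least $2$, then $\overrightarrow{T}$ contains at least two sinks; each sink has empty $\{1\}$-neighborhood, hence weight $0$, so two sinks would share the same weight, contradicting distance antimagicness. Thus $d^+(v)\leq 1$ for every vertex $v$.

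Next, I would bound the in-degrees by a similar weight-collision argument. Suppose two distinct vertices $u_1,u_2$ both send arcs to the same vertex $v$. Neither $u_i$ is a sink (each has an out-arc to $v$), and by the first step each has out-degree exactly $1$ with unique out-neighbor $v$. Consequently $\omega(u_1)=h(v)=\omega(u_2)$, again a contradiction. So $d^-(v)\leq 1$ for all $v$. Combining the two bounds gives $d^+(v)+d^-(v)\leq 2$, so the underlying tree has maximum degree at most $2$; a tree with maximum degree at most $2$ is a path, which settles the shape of $\overrightarrow{T}$.

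It remains to show this path is oriented unidirectionally. Writing it as $v_1v_2\cdots v_n$, each internal vertex has underlying degree $2$, so the degree bounds force $d^+(v_i)=d^-(v_i)=1$ for $2\leq i\leq n-1$; in other words, one of its two incident edges is directed into $v_i$ and the other away from it. This ``pass-through'' at every internal vertex means that once the direction of the edge $v_1v_2$ is fixed, it propagates consistently along the entire path, producing a unidirectional orientation.

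The main obstacle is the in-degree bound in the second step, where one must verify that both $u_1$ and $u_2$ really contribute \emph{exactly} the weight $h(v)$ and nothing else; this is precisely why Step 1 is used first, as it upgrades ``has an out-arc to $v$'' to ``has $v$ as its \emph{unique} out-neighbor.'' The rest is essentially bookkeeping about degree sequences in trees.
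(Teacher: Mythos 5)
Your proof is correct and follows essentially the same route as the paper's: out-degree $\geq 2$ forces two sinks of weight zero, in-degree $\geq 2$ forces two in-neighbors of equal weight, and the resulting degree bounds leave only the unidirectional path. In fact your ordering is slightly more careful than the paper's, since establishing $d^+(v)\leq 1$ first is exactly what justifies the claim that two in-neighbors of a common vertex have identical $\{1\}$-neighborhoods.
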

\begin{proof}
Assume that $\overrightarrow{T}$ is not a unidirectional path. Based on the in-degree and out-degree of vertices in $T$, we investigate the following two cases.
\begin{description}
    \item[Case 1 ($\overrightarrow{T}$ has a vertex of in-degree two or more)] The in-neighbors have the same neighborhood.
    \item[Case 2 ($\overrightarrow{T}$ has a vertex of out-degree two or more)] Let such a vertex be $v$ and consider the paths starting from $v$. Each path will end at a sink. Thus, at least two sinks exist, both with zero $D$-weights.
\end{description}
Both cases lead to a contradiction, and so, all vertices of $\overrightarrow{T}$ have at most one in-degree or one out-degree. This implies that $\overrightarrow{T}$ is a unidirectional path.
\end{proof}

Finally, Theorem \ref{th:uni->minD<=1} and Theorem \ref{lem:Tisunipath} lead to the desired characterization.
\begin{theorem} \label{thm:DAtree}
An oriented tree is distance antimagic if and only if the tree is a unidirectional path.
\end{theorem}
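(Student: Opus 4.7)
The theorem is essentially a packaging of two results already proved in the paper, so the plan is to apply them in tandem and verify that their hypotheses line up perfectly with the notion of distance antimagic (i.e.\ the case $D = \{1\}$).

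For the forward direction, I would simply invoke Theorem \ref{lem:Tisunipath}: if $\overrightarrow{T}$ is distance antimagic, then the lemma (just proved) forces $\overrightarrow{T}$ to be a unidirectional path. No further work is needed here; the contrapositive case analysis on in-degree $\ge 2$ (giving twin in-neighbors) and out-degree $\ge 2$ (giving multiple sinks of weight $0$) has already been carried out there.

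For the reverse direction, I would specialize Theorem \ref{th:uni->minD<=1} to $D = \{1\}$. Since $\min(D) = 1 \le 1$, that theorem guarantees that every unidirectional $\overrightarrow{P_n}$ with $n \ge 3$ is $\{1\}$-antimagic, which by definition is distance antimagic. The remaining tiny case $n = 2$ (a single arc $v_1 \to v_2$) can be handled by inspection: $\omega(v_1) = 2 \ne 0 = \omega(v_2)$, so it is distance antimagic as well. (The trivial tree $n=1$ is vacuously distance antimagic.)

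I do not foresee any obstacle, as both directions reduce to results already available; the only point requiring a line of care is noting that distance antimagic is exactly the $D = \{1\}$ instance of $D$-antimagic, so that Theorem \ref{th:uni->minD<=1} applies verbatim. The proof should therefore be essentially one sentence combining the two cited results.
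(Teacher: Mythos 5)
Your proposal is correct and follows exactly the paper's route: the paper derives Theorem \ref{thm:DAtree} by combining Theorem \ref{lem:Tisunipath} (forward direction) with Theorem \ref{th:uni->minD<=1} specialized to $D=\{1\}$ (reverse direction). Your explicit treatment of the small cases $n=1,2$, which fall outside the $n\ge 3$ hypothesis of Theorem \ref{th:uni->minD<=1}, is a point of care the paper leaves implicit, but it does not change the argument.
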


However, the characterizations of the existence of $D$-antimagic trees for other distance sets are still largely unknown.
\begin{problem}
Let $D\neq \{1\}$ be a distance set. Find necessary and sufficient conditions for a tree to admit a $D$-antimagic labeling.
\end{problem}

\section{$D$-Antimagic Labeling on Oriented Linear Forests} \label{sec:forest}

Recall that a disjoint union of trees is a \textit{forest} and a disjoint union of paths is a \textit{linear forest}. We start our study of the existence of $D$-antimagic forests by observing the minimum value in the distance set $D$.

\begin{lemma} \label{lem:minforest} Let $F$ be an oriented forest.
If $F$ is $D$-antimagic then $\min(D)\le 1$. Furthermore, if $min(D)=1$ then $F$ consists of exactly one tree.
\end{lemma}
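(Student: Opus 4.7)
The plan is to reduce both halves of the lemma to a single counting observation: whenever $0\notin D$, every sink $v$ of $\overrightarrow{F}$ satisfies $N_D(v)=\emptyset$ and therefore $\omega_D(v)=0$, because a sink has no vertex reachable from it at positive distance. By part (1) of Observation \ref{ob:Tonesink}, every nontrivial tree component contributes at least one sink, and an isolated vertex is trivially a sink; hence the number of sinks of $\overrightarrow{F}$ is at least its number of components. Since a $D$-antimagic labeling admits at most one vertex with $D$-weight $0$, this immediately bounds the number of components of $\overrightarrow{F}$ by one whenever $0\notin D$.

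From this key reduction, the "furthermore" assertion is immediate: $\min(D)=1$ implies $0\notin D$, so $\overrightarrow{F}$ has at most one component, and being nonempty it consists of exactly one tree.

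For the main claim, assume toward contradiction that $\min(D)\ge 2$. The observation above already forces $\overrightarrow{F}$ to be a single tree; we may assume $|V(\overrightarrow{F})|\ge 2$, since $|V(\overrightarrow{F})|=1$ forces $\partial=0$ and $D\subseteq\{0\}$, contradicting $\min(D)\ge 2$. Applying part (2) of Observation \ref{ob:Tonesink}, the existence of only one sink forces the maximum out-degree to be at most $1$, so $\overrightarrow{F}$ is an in-arborescence rooted at the unique sink $s$. Then $s$ has at least one in-neighbor $u$, and the unique out-neighbor of $u$ is $s$, making $s$ the only vertex reachable from $u$; it lies at distance $1$, and since $1<\min(D)$, we get $N_D(u)=\emptyset$. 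Hence $\omega_D(u)=0=\omega_D(s)$, the desired contradiction.

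I expect the main difficulty to be organizational rather than technical: threading the two parts of Observation \ref{ob:Tonesink} in the right order so that the chain \emph{at most one sink $\Rightarrow$ one tree $\Rightarrow$ maximum out-degree at most $1$ $\Rightarrow$ in-arborescence} is airtight, and then identifying a second vertex whose $D$-neighborhood collapses under $\min(D)\ge 2$. Once the sink's in-neighbor is pinpointed as that vertex, the contradiction is immediate, and one must remember to separately dispatch the trivial $|V(\overrightarrow{F})|=1$ case so that the in-neighbor genuinely exists.
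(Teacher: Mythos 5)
Your proof is correct and rests on the same engine as the paper's: sinks have empty $D$-neighborhood (hence weight $0$) whenever $0\notin D$, and a $D$-antimagic labeling tolerates at most one zero weight. The ``furthermore'' half and the component-counting step match the paper exactly. Where you diverge is in the $\min(D)\ge 2$ case: the paper simply asserts that ``the sink and its in-neighbors have zero $D$-weights,'' which is not literally true in an arbitrary oriented tree --- an in-neighbor $u$ of a sink could have a second out-neighbor leading to vertices at distance $\ge 2$ from $u$, giving $u$ a nonzero weight. Your argument closes this gap properly: you first deduce that there is exactly one sink, invoke part (2) of Observation \ref{ob:Tonesink} to force maximum out-degree $1$, conclude the tree is an in-arborescence, and only then observe that the sink's in-neighbor has out-eccentricity $1<\min(D)$ and hence weight $0$. (Equivalently, one could note that if some in-neighbor of the sink had out-degree $\ge 2$ there would be a second sink, which is the implicit case split the paper omits.) So your route is the same in spirit but is the more airtight of the two; the separate dispatch of the one-vertex case is also a detail the paper does not bother with.
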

\begin{proof}
Assume that $min(D)\ge 2$. By Observation \ref{ob:Tonesink}, $F$ contains a sink. The the sink and its in-neighbors have zero $D$-weights. 

Lastly, let $min(D)=1$. Assume that $F$ consists of at least two trees. Again, by Observation \ref{ob:Tonesink}, $F$ consists of at least two sinks, all of which have zero $D$-weights.
\end{proof}

Now, for $D=\{1\}$, as a direct consequence of Lemma \ref{lem:minforest} and Theorem \ref{thm:DAtree}, we obtain the following characterization.
\begin{theorem}
An oriented forest is distance antimagic if and only if it consists of a single unidirectional path.
\end{theorem}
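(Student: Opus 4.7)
The statement to prove is the biconditional: an oriented forest $F$ is $\{1\}$-antimagic (distance antimagic) if and only if $F$ is a single unidirectional path. My plan is to dispatch each direction by citing one of the two results already established and combining them, since both halves of the argument are essentially packaged in the preceding lemma and theorem.

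For the forward direction, I would begin by assuming $F$ is distance antimagic, so the distance set is $D=\{1\}$ and $\min(D)=1$. Applying Lemma \ref{lem:minforest} immediately collapses the forest to a single tree: the ``furthermore'' clause says that when $\min(D)=1$, the forest must consist of exactly one tree, otherwise two distinct sinks would both have zero $\{1\}$-weight. With $F$ now known to be a single tree that is distance antimagic, Theorem \ref{thm:DAtree} forces that tree to be a unidirectional path. Chaining these two statements yields the ``only if'' direction with no further computation.

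For the converse, I would simply observe that a unidirectional path is a tree, so Theorem \ref{thm:DAtree} (or equivalently Theorem \ref{th:uni->minD<=1} applied with $D=\{1\}$, since $\min(\{1\})=1\le 1$) tells us directly that such a path admits a distance antimagic labeling.

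There is no real obstacle here: the statement is presented as a corollary, and the only bookkeeping is to make sure the two cited results are applied in the correct logical order (first reduce ``forest'' to ``tree'' via Lemma \ref{lem:minforest}, then reduce ``tree'' to ``unidirectional path'' via Theorem \ref{thm:DAtree}). The proof should therefore be a short paragraph with two sentences, one per direction.
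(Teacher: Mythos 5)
Your proposal is correct and matches the paper's own reasoning exactly: the paper presents this theorem as a direct consequence of Lemma \ref{lem:minforest} (reducing the forest to a single tree when $\min(D)=1$) and Theorem \ref{thm:DAtree} (characterizing distance antimagic trees as unidirectional paths), applied in precisely the order you describe. No further comment is needed.
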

 

Next, we would like to characterize $D$-antimagic linear forests consisting of isomorphic paths. By Lemma \ref{lem:minD}, we know that for such linear forests to be $D$-antimagic, $min(D)=0$. For $m,n \geq 1$, let $\overrightarrow{mP_n}$ be the disjoint union of $m$ copies of $\overrightarrow{P_n}$, where $V(\overrightarrow{mP_n}):=\{v^j_i | 1 \leq i \leq n, 1 \leq j \leq m\}$. 

\begin{corollary} \label{cor:minD=0}
    If $\overrightarrow{mP_n}$ is $D$-antimagic, then $min(D)=0$.
\end{corollary}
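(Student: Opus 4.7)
The plan is a direct application of Lemma \ref{lem:minforest}, which was just established, together with the observation that $\overrightarrow{mP_n}$ is an oriented forest with $m \ge 2$ connected components (each an oriented path, hence a tree). I therefore do not expect to introduce any new construction or combinatorial argument; the work has already been done in the lemma, and the corollary simply combines its two conclusions.

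First I would note that, since each copy of $\overrightarrow{P_n}$ is an oriented tree, the graph $\overrightarrow{mP_n}$ is an oriented forest in the sense of Section \ref{sec:forest}, so Lemma \ref{lem:minforest} applies. Assuming $\overrightarrow{mP_n}$ is $D$-antimagic, the first conclusion of Lemma \ref{lem:minforest} immediately yields $\min(D)\le 1$, so it remains only to exclude the possibility $\min(D)=1$.

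Next I would invoke the second conclusion of Lemma \ref{lem:minforest}: if $\min(D)=1$, then the underlying forest must consist of exactly one tree. Since the corollary addresses disjoint unions of $m\ge 2$ isomorphic paths (as made explicit in the paragraph preceding its statement), $\overrightarrow{mP_n}$ has at least two connected components, contradicting the requirement that it be a single tree. Hence $\min(D)\ne 1$, and combined with $\min(D)\le 1$ we conclude $\min(D)=0$, which is the desired statement.

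The only potential obstacle is bookkeeping about the range of $m$: the displayed formulation quantifies over $m,n\ge 1$, but the case $m=1$ (a single unidirectional path with $D=\{1\}$) is $D$-antimagic by Theorem \ref{th:uni->minD<=1}. I would handle this by writing the proof under the standing assumption $m\ge 2$ already in force in this subsection (``linear forests consisting of isomorphic paths''), so that the contrapositive of the second clause of Lemma \ref{lem:minforest} applies cleanly. Aside from that, no calculation is needed and the proof reduces to a two-sentence deduction.
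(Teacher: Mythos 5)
Your proof is correct and takes essentially the same route as the paper, which states the corollary without a separate proof as an immediate consequence of the preceding lemmas: $\min(D)\le 1$ by the forest lemma, and $\min(D)=1$ would force a single tree, impossible when $\overrightarrow{mP_n}$ has $m\ge 2$ components. Two points in your favor: your appeal to Lemma \ref{lem:minforest} is actually more apt than the paper's in-text citation of Lemma \ref{lem:minD} (which alone only gives $\min(D)\le 1$), and your flagging of the $m=1$ case is a genuine catch, since a single unidirectional path is $\{1\}$-antimagic by Theorem \ref{th:uni->minD<=1} and the corollary as displayed would fail there.
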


Consider a unidirectional orientation $\Phi$ on $\overrightarrow{mP_n}$ as $\Phi :=\left\{\left(v_{i+1}^j, v_i^j\right)|1\le i\le n-1, 1\le j\le m\right\}$.
Under $\Phi$, the following $D$-neighborhoods, for $D=\{0\}$ and $D=\{k\}, 1\leq k\leq n-1$ are obtained.
\begin{description}
    \item[For $D=\{0\}$:] $N_D(v_i^j)=\{v_i^j\}$, for $1 \leq i\leq n$ and $1 \leq j\leq m$.
    \item[For $D=\{k\}$:] $N_D(v_i^j)= \{v_i^j\}$, for $1 \leq i \leq k$ and $1\leq j\leq m$,\\
    $N_D(v_i^j) = \{v_i^j, v_{i+1}^j\}$, for $k+1 \leq i \leq n$ and $1\leq j\leq m$.
\end{description}


Now we are ready to prove that if $|D|=2$, the linear forests consisting of isomorphic paths are $D$-antimagic.
\begin{theorem}\label{thm:mPn-0,k} Let $m,n \geq 1$ and $1\le k\le n-1$. A $\Phi$ oriented
$\overrightarrow{mP_n}$ is $\{0,k\}-$antimagic.
    \end{theorem}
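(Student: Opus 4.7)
The plan is to exhibit a single explicit bijection and verify antimagicness by a short case split. Under the orientation $\Phi$, the vertex $v_1^j$ is the sink of the $j$-th path, and the only vertex at distance $k$ from $v_i^j$ is $v_{i-k}^j$ when $i>k$; otherwise $N_{\{k\}}(v_i^j)=\emptyset$. Consequently
$$N_{\{0,k\}}(v_i^j) = \begin{cases} \{v_i^j\}, & 1 \le i \le k,\\ \{v_i^j, v_{i-k}^j\}, & k+1 \le i \le n.\end{cases}$$
I would define the interleaved labeling $h(v_i^j) = (i-1)m + j$, which is manifestly a bijection from $V(\overrightarrow{mP_n})$ onto $\{1,2,\dots,mn\}$ and is set up so that the index $i$ contributes the dominant term and $j$ a small correction.

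The second step is a direct computation: the weights are $\omega(v_i^j) = (i-1)m+j$ for $1\le i\le k$, and $\omega(v_i^j) = (2i-k-2)m + 2j$ for $k+1\le i\le n$. Distinctness then splits into three easy cases. Within the block $i\le k$, the values $(i-1)m+j$ exhaust $\{1,\dots,km\}$ bijectively. Within the block $i>k$, an equality of weights forces $(i_1-i_2)m = j_2-j_1$, and since $|j_2-j_1|<m$, we obtain $(i_1,j_1)=(i_2,j_2)$. Across the two blocks, the maximum weight in the first block is $km$ (attained at $i=k, j=m$), while the minimum weight in the second block is $(2(k+1)-k-2)m + 2 = km+2$, so the two ranges are disjoint.

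There is no deep obstacle; the only thing that needs care is the choice of labeling. A more naive ``path-by-path'' labeling such as $h(v_i^j) = (j-1)n + i$ fails already for small parameters (e.g.\ $n=4$, $k=1$, $m=2$ produces $\omega(v_1^2) = \omega(v_3^1) = 5$), because the $j$-coordinate then enters with the large multiplier $n$, allowing cross-copy collisions between a singleton weight and a paired weight. Interleaving across copies instead (multiplier $m$ on $i$, coefficient $1$ on $j$) ensures that the two blocks of weights sit in well-separated intervals and that within each block the coordinates $(i,j)$ are recoverable from the weight modulo $m$. Once the labeling is in hand, the remainder is the bookkeeping sketched above.
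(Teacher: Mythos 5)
Your proposal is correct and follows essentially the same route as the paper: the identical interleaved labeling $h(v_i^j)=m(i-1)+j$, the same $\{0,k\}$-neighborhood description, and the same weight formulas $(i-1)m+j$ and $(2i-k-2)m+2j$. Your three-case distinctness check (within each block and across the block boundary at $km$ versus $km+2$) is just a more explicit version of the paper's claim that the weights form a strictly increasing sequence, and your remark about why a path-by-path labeling fails is a nice bonus but not a departure in method.
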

    \begin{proof} Consider two cases based on the value of $k$.
    \begin{description}
        \item[Case 1 ($k=1$)] Define a bijection $h:V(\overrightarrow{mP_n})\rightarrow\{1, 2, \dots, mn\}$ where $h(v_i^j)=m(i-1)+j$ for $i=1, 2, \dots, n$ and $j=1, 2, \dots, m$.  We will prove that $h$ is $D$-antimagic.
        Under $\Phi$, the $D$-neighborhood and $D$-weight for each vertex are as follows.
        \begin{description}
            \item[For $i=1$:]  $N_D(v_1^j)=\left\lbrace v_1^j\right\rbrace$ and  $\omega_D(v_1^j)=j.$ 
            \item[For $2\le i\le n$:] $N_D(v_i^j)=\left\lbrace v_i^j, v_{i-1}^j\right\rbrace$ and $\omega_D(v_{i}^{j}) = 2j+m(2i-3)$.
        \end{description}
        It can be shown that the $D$-weights form a strictly increased sequence where 
        $\omega_D(v_{i}^{j})< \omega_D(v_{i}^{j+1})$, for $1\le j\le m-1$, and $\omega_D(v_{i}^{m}) < \omega_D(v_{i+1}^{1})$, for $1\le i\le n-1$. 
\item[Case 2 ($k=2,3,\dots,n-1$)] For $j=1, 2, \dots, m$, the $D$-neighborhood and $D$-weight of each all vertices are as follows:
\begin{description}
    \item[For $1 \leq i \leq k$:] $N_D(v_i^{j})=\{v_i^{j}, v_{i-k}^{j}\}$ and $\omega_{D}(v_i^{j})=m(i-1)+j$.
    \item[For $k+1 \leq i \leq n$:] $N_D(v_i^{j})=\{v_i^J\}$ and $\omega_{D}(v_i^{j})= 2j+(2i-k-2)m.$
\end{description}

    It can be shown that the $D$-weights form a strictly increased sequence where         $\omega_D(v_{i}^{j})< \omega_D(v_{i}^{j+1})$, for $1\le j\le m-1$, and $\omega_D(v_{i}^{m}) < \omega_D(v_{i+1}^{1})$, for $i = 1, 2, \dots, n-1.$ 
    \end{description}
    Therefore, each vertex has a distinct $D$-weight and $h$ is a $D$-antimagic labeling. 
    \end{proof}

Figure \ref{fig:mpn01} provides an example of the labeling in Theorem \ref{thm:mPn-0,k}.

\begin{figure}[h!]
    \centering    \includegraphics[width=0.6\linewidth]{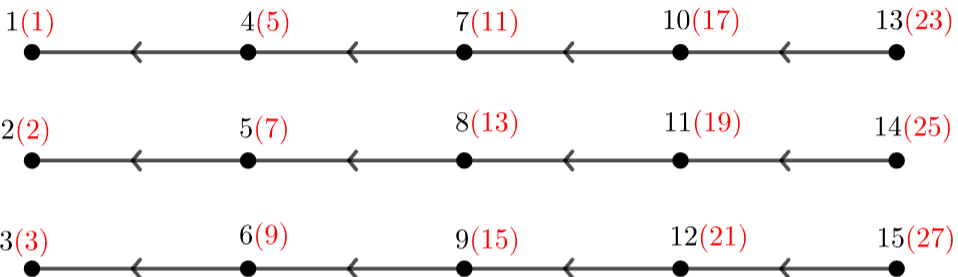}
    \caption{A \{0,1\}-antimagic labeling on $\Phi$ oriented $\overrightarrow{3P5}$.}
    \label{fig:mpn01}
\end{figure} 

Theorem \ref{thm:mPn-0,k} and the fact that the only orientation admits distance $n-1$ is unidirectional lead us to characterize the $\{0,n-1\}$-antimagic $\overrightarrow{mP_n}$.
\begin{corollary}
Let $m,n \geq 1$. $\overrightarrow{mP_n}$ is $\{0,n-1\}$-antimagic if and only if $\overrightarrow{mP_n}$ is unidirectional.
\end{corollary}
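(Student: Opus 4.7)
The plan is to derive the biconditional by invoking Theorem~\ref{thm:mPn-0,k} for the reverse direction and using the built-in validity condition on the distance set for the forward direction.

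For the sufficient direction, if every component of $\overrightarrow{mP_n}$ is a unidirectional $\overrightarrow{P_n}$, then up to isomorphism the graph is the $\Phi$-oriented $\overrightarrow{mP_n}$ introduced just before Theorem~\ref{thm:mPn-0,k}. Applying that theorem with $k = n-1$ immediately yields that $\overrightarrow{mP_n}$ is $\{0,n-1\}$-antimagic, with the labeling $h(v_i^j) = m(i-1)+j$ (or rather the $k\ge 2$ branch of its proof) giving an explicit witness.

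For the necessary direction, I would argue from the definition of a $D$-antimagic labeling, which requires $D \subseteq \{0,1,\ldots,\partial\}$. If $\overrightarrow{mP_n}$ is $\{0,n-1\}$-antimagic, then $n - 1 \in D$ forces $\partial \ge n - 1$. Because every component is a copy of $P_n$ on $n$ vertices, the maximum finite directed distance inside any component is at most $n-1$; and by Remark~\ref{rem:oripath} (together with Lemma~\ref{lem:sinksource}) this upper bound is attained precisely when the component has a single sink and a single source at its two ends, i.e.\ when it is a unidirectional directed path. Therefore the orientation of $\overrightarrow{mP_n}$ must be the unidirectional one.

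The main delicate point is packaging the distance-set validity hypothesis into a structural statement about the orientation: the containment $n - 1 \in D \subseteq \{0,1,\ldots,\partial\}$ is what drives the necessity, and it matches the authors' cue that ``the only orientation which admits distance $n - 1$ is unidirectional.'' Once this observation is in place, both implications reduce to already-established results and no new labeling has to be constructed from scratch.
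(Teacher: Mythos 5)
Your overall strategy is the same as the paper's: the corollary is presented there as a direct consequence of Theorem \ref{thm:mPn-0,k} (taken with $k=n-1$) together with the observation that distance $n-1$ is realized in an oriented path only by the unidirectional orientation. Your sufficiency direction is fine and matches the paper exactly.

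The necessity direction, however, has a gap for $m\ge 2$, and it is worth making explicit because the paper's one-line justification glosses over the same point. The validity condition $n-1\in D\subseteq\{0,1,\dots,\partial\}$ only forces $\partial\ge n-1$, and $\partial$ is the maximum finite distance over the \emph{whole} forest. Hence it suffices that \emph{one} component of $\overrightarrow{mP_n}$ be unidirectional for $\{0,n-1\}$ to be an admissible distance set; the remaining $m-1$ components may carry arbitrary orientations. Your sentence ``Therefore the orientation of $\overrightarrow{mP_n}$ must be the unidirectional one'' does not follow from what precedes it. Indeed, the realizability observation alone cannot close the argument: for $n=3$, $m=2$, take one component $v_1\rightarrow v_2\rightarrow v_3$ and a second component $u_1\rightarrow u_2\leftarrow u_3$; then $\partial=2$, the $\{0,2\}$-neighborhoods are $\{v_1,v_3\}$, $\{v_2\}$, $\{v_3\}$, $\{u_1\}$, $\{u_2\}$, $\{u_3\}$, and the labeling $f(v_1)=6$, $f(v_2)=4$, $f(v_3)=5$, $f(u_1)=1$, $f(u_2)=2$, $f(u_3)=3$ yields the pairwise distinct weights $11,4,5,1,2,3$. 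So a correct proof of necessity would require an additional argument showing that a non-unidirectional component always produces a weight collision (which the example above shows is not in general possible); as written, your argument, like the paper's, only establishes that at least one component must be unidirectional.
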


In the next theorem, we characterize $D$-antimagic unidirectional $\overrightarrow{mP_n}$, by showing that, for this particular case, the sufficient condition in Corollary \ref{cor:minD=0} is also necessary.

\begin{theorem} 
Let $m,n\ge 2$. There exists an orientation such that $\overrightarrow{mP_n}$ is $D$-antimagic if and only if $\min{(D)}=0$.
\end{theorem}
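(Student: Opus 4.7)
The forward implication is immediate from Corollary \ref{cor:minD=0}, which already says that any $D$-antimagic orientation of $\overrightarrow{mP_n}$ forces $\min(D)=0$. The content of the theorem therefore lies in the converse, and my plan is to exhibit, for every $D\subseteq\{0,1,\dots,n-1\}$ with $0\in D$, a single orientation and a single labeling that work uniformly.

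My proposed construction is the unidirectional orientation $\Phi$ together with the labeling $h(v_i^j)=m(i-1)+j$ already used in Theorem \ref{thm:mPn-0,k}. Under $\Phi$, the vertex $v_i^j$ reaches precisely $v_{i-1}^j,\dots,v_1^j$ with $d(v_i^j,v_{i-s}^j)=s$, so writing $D=\{d_0,d_1,\dots,d_k\}$ with $0=d_0<d_1<\dots<d_k$, and setting
\[
a(i)=|\{t:d_t\le i-1\}|,\qquad S_i=\sum_{t:d_t\le i-1}(i-d_t-1),
\]
a routine calculation gives
\[
\omega_D(v_i^j)=a(i)\,j+m\,S_i.
\]
Because $0\in D$, we have $a(i)\ge 1$, so for fixed $i$ the values $\omega_D(v_i^1),\dots,\omega_D(v_i^m)$ form a strictly increasing arithmetic progression with common difference $a(i)$.

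The remaining step, and the one I expect to be the main obstacle, is comparing the top of the $i$-th block with the bottom of the $(i+1)$-st. A direct subtraction gives
\[
\omega_D(v_{i+1}^1)-\omega_D(v_i^m)=a(i+1)+m\bigl(S_{i+1}-S_i-a(i)\bigr),
\]
so everything reduces to the identity $S_{i+1}-S_i=a(i)$. I plan to verify this by a short case analysis on whether a new element of $D$ becomes active at level $i+1$. If $a(i+1)=a(i)$, then no new summand appears and each of the $a(i)$ existing terms $i-d_t-1$ simply increases by one; if $a(i+1)=a(i)+1$, then $d_{a(i)}=i$ enters and contributes $0$ to $S_{i+1}$, while the preexisting terms again each increase by one. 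In both cases $S_{i+1}-S_i=a(i)$, so $\omega_D(v_{i+1}^1)-\omega_D(v_i^m)=a(i+1)\ge 1$. Combined with the within-block monotonicity, this shows that all $D$-weights are pairwise distinct, which establishes the sufficiency and completes the theorem.
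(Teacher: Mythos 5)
Your proposal is correct and follows essentially the same route as the paper: the forward direction cites Corollary \ref{cor:minD=0}, and the converse uses the unidirectional orientation $\Phi$ with the labeling $h(v_i^j)=m(i-1)+j$ and shows the $D$-weights increase strictly in the lexicographic order of $(i,j)$. Your explicit verification of the block-transition identity $S_{i+1}-S_i=a(i)$ is in fact more careful than the paper's version, which merely asserts the monotonicity (and states the ordering claims somewhat loosely).
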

\begin{proof} 
By  Corollary \ref{cor:minD=0}, we only need to prove that if $\min(D)=0$, then there exists a $D$-antimagic labeling on unidirectional $\overrightarrow{mP_n}$. 

If $D=\{0\}$, see Observation \ref{obs:0-antimagic}, and if $|D|=2$, see Theorem \ref{thm:mPn-0,k}. For $|D|\ge 3$, let $D=\{d_0, d_1, d_2,\dots, d_k\}$ where $0=d_0<d_1<d_2<\dots<d_k\le n-1$. Choose the unidirectional orientation $\Phi$ and define a bijection $g: V(\overrightarrow{mP_n})\rightarrow\{1, 2, \dots, mn\}$ by $g(v_i^j)=j+m(i-1)$ for $i=1, 2, \dots, n$ and $j=1,2,\dots,m.$

For $1\le j\le m$, under $g$, the $D$-neighborhood and $D$-weight of all vertices are:
\begin{description}
    \item[For $1\le i\le d_1$:] $N_D(v_i^j)=\{v_i^j\}$ and $\omega_D(v_i^j)=j+m(i-1)$.
    \item[For $d_{t_0}+1\le i\le d_{t_0+1},1\le t_0\le k-1$:] $N_D(v_i^j)=\{v_{i-d_t}^j|0 \le t \le t_0\}$ and \\$\omega_D(v_i^j)=(t_0+1)(n-i+1)-\sum_{t=0}^{t_0} d_t$.
    \item[For $d_k+1\le i\le n$:] $N_D(v_i^j)= \{v_{i-d_t}^j|0 \le t \le k\}$ and $\omega_D(v_i^j)=(k+1)(n-i+1)-\sum_{t=0}^{k} d_t$.
\end{description}
For $j'>j$ and arbitrary $i$ and $i'$, $\omega_D(v_{i'}^{j'})>\omega_D(v_i^j)$. For $i'>i$ and $j=j'$, $\omega_D(v_{i'}^{j'})>\omega_D(v_i^j)$. This completes the proof.
\end{proof}


Our last result is a $D$-antimagic labeling for linear forests that consist of arbitrary paths. Here we provide the notation for such linear forests. For $j=1,\dots,t$, where $n_j<n_{j'}$, if $j<j'$, let $\mathcal{F}=\bigcup_{j=1}^t m_j\overrightarrow{P_{n_j}}$ be an oriented linear forest with vertex set $$V(\mathcal{F})=\left\{v_i^{j,s}|1\le j\le t,1\le s\le m_j, 1 \le i \le n_j\right\}.$$ 
Set a unidirectional orientation on $\mathcal{F}$ by $A\left(\mathcal{F}\right)=\left\{\left(v_i^{j,s}, v_{i-1}^{j,s}\right)|1\le j\le t, 1\le s\le m_j, \ 2\le i\le n_j\right\}.$
Let $1 \le k \le n_1-1$. For all $j \in [1,t]$ and $s \in [1,m_j]$, the $\left\{k\right\}$-neighborhood of each vertex in $\mathcal{F}$ is $N_{\left\{k\right\}}\left(v_i^{j,s}\right)=\emptyset$, for $1 \le i \le k$, and $N_{\left\{k\right\}}\left(v_i^{j,s}\right)=\left\{v_{i-k}^{j,s}\right\}$, for $k+1\le i\le n_j$. 


\begin{theorem} \label{thm:F}
 A unidirectional $\mathcal{F}$ is $\left\{0,1\right\}$-antimagic. 
\end{theorem}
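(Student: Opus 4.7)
The plan is to adapt the layer-by-layer labeling of Theorem \ref{thm:mPn-0,k} (Case 1 with $k=1$) to the setting of paths of possibly different lengths. I would label all vertices at position $1$ first, then all at position $2$, and so on, so that $\{0,1\}$-weights at later positions (which are sums of consecutive labels, hence larger) automatically exceed earlier ones. Within each layer, vertices would be ordered lexicographically by $(j,s)$. The strict inequality $n_1 < n_2 < \cdots < n_t$ will be essential here: it ensures that the number of paths surviving to position $i$ is non-increasing in $i$, and that at most one path-class drops out between any two consecutive layers.

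Concretely, I would set $M_i = \sum_{j : n_j \ge i} m_j$ and $L_i = \sum_{i'=1}^{i-1} M_{i'}$, and define $h(v_i^{j,s}) = L_i + r_i(j,s)$, where $r_i(j,s)$ is the lex rank of $(j,s)$ among the $M_i$ surviving pairs at position $i$. Since $\sum_{i=1}^{n_t} M_i = |V(\mathcal{F})|$, this yields a bijection onto $\{1,2,\ldots,|V(\mathcal{F})|\}$. The $\{0,1\}$-weight at position $1$ is just $r_1(j,s) \in \{1,\ldots,M_1\}$. For $i \ge 2$, the essential identity is $r_{i-1}(j,s) - r_i(j,s) = M_{i-1} - M_i$: the paths that disappear between positions $i-1$ and $i$ all have smaller $j$-index than any surviving path (since path lengths are strictly increasing), so their entire contribution shifts the rank uniformly. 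Substituting this into $\omega(v_i^{j,s}) = h(v_i^{j,s}) + h(v_{i-1}^{j,s})$ yields the clean expression $\omega(v_i^{j,s}) = 2L_i - M_i + 2 r_i(j,s)$.

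It then remains to verify distinctness. Within any layer $i \ge 2$ the weights form an arithmetic progression $2L_i - M_i + 2,\, 2L_i - M_i + 4,\, \ldots,\, 2L_i + M_i$ of common difference $2$, so they are pairwise distinct; layer-$1$ weights are the distinct labels $1,\ldots,M_1$. For the across-layer comparison, the maximum weight at layer $i$ (namely $M_1$ if $i=1$, else $2L_i + M_i$) is strictly smaller than the minimum weight at layer $i+1$, namely $2L_{i+1} - M_{i+1} + 2$, since the gap equals $M_i - M_{i+1} + 2 > 0$ by the monotonicity $M_{i+1} \le M_i$. I expect the main obstacle to be the clean derivation of the rank-shift identity: its tidy form depends crucially on the ordering convention (lex with $j$ primary) together with the strict ordering of path lengths, without which the formula for $\omega(v_i^{j,s})$ would not collapse into a single arithmetic progression per layer.
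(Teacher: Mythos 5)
Your proposal is correct and is essentially the paper's own proof: your $h(v_i^{j,s}) = L_i + r_i(j,s)$ is exactly the paper's bijection $f_*$ (label position by position, ordering the surviving paths lexicographically by $(j,s)$ within each position), and your weight formula $2L_i - M_i + 2r_i(j,s)$ together with the within-layer and across-layer monotonicity is the same verification the paper carries out in its block notation.
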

\begin{proof} The $D$-neighborhoods of each vertex of $\mathcal{F}$ are 
$N_D\left(v_1^{j,s}\right)=\left\{v_1^{j,s}\right\}$, for $1 \le j \le t, 1 \le s \le m_j$, and
$N_D\left(v_i^{j,s}\right)=\left\{v_i^{j,s}, v_{i-1}^{j,s}\right\}$, for $1\le j\le t,1\le s\le m_j,2\le i\le n_j.$

For all $1\le j\le t, 1\le s\le m_j,$ and $1\le i\le n_j$, define a mapping $f_*:V\left(\mathcal{F}\right) \rightarrow \{1, 2, \dots,\sum_{p=1}^t m_pn_p\}$ by 
$$f_*\left(v_i^{j,s}\right)=\sum_{q=1}^{j_0-1}\sum_{p=q}^t m_p\left(n_q-n_{q-1}\right)+\left(i-n_{j_0-1}-1\right)\sum_{q=j_0}^t m_q+\sum_{q=j_0}^{j-1} m_q +s$$
where $n_0=0$ and $j_0$ is a natural number such that $n_{j_0-1}+1\le i\le n_{j_0}$. 

First, we will show that $f_*$ is a bijection. 
Let $v_i^{j,s},v_{i'}^{j',s'}$ be two distinct vertices in $\mathcal{F}$. This means either $j \ne j'$ or $i \ne i'$ or $s \ne s'$. We will investigate each of the three possible inequalities.
\begin{description}
\item[Case 1 ($i \ne i'$):] Without loss of generality, suppose $i'>i$. If $n_{j_0'-1}+1\le i'\le n_{j_0'}$, then 
\begin{align*}
 f_*\left(v_i^{j,s}\right)=&\sum_{q=1}^{j_0-1} \left(\sum_{p=q}^t m_p\right) \left(n_q - n_{q-1}\right)+\left(i-n_{j_0-1}-1\right) \sum_{q=j_0}^t m_q + \sum_{q=j_0}^{j-1} m_q +s\\
&\le \sum_{q=1}^{j_0-1} \left(\sum_{p=q}^t m_p\right) \left(n_q - n_{q-1}\right)+\left(i-n_{j_0-1}\right) \sum_{q=j_0}^t m_q\\
 &<\sum_{q=1}^{j_0'-1} \left(\sum_{p=q}^t m_p\right) \left(n_q - n_{q-1}\right)+\left(i'-n_{j_0'-1}-1\right) \sum_{q=j_0'}^t m_q + \sum_{q=j_0'}^{j'-1} + s'=f_*\left(v_{i'}^{j',s'}\right).
\end{align*}
\item[Case 2 ($j\ne j'$ and $i=i'$):] Without lost of generality, let $j'>j$. 
Then,
\begin{align*}
    f_*&\left(v_i^{j,s}\right)=\sum_{q=1}^{j_0-1} \left(\sum_{p=q}^t m_p\right) \left(n_q - n_{q-1}\right)+\left(i-n_{j_0-1}-1\right) \sum_{q=j_0}^t m_q + \sum_{q=j_0}^{j-1} m_q +s\\
    &\le \sum_{q=1}^{j_0-1} \left(\sum_{p=q}^t m_p\right) \left(n_q - n_{q-1}\right)+\left(i'-n_{j_0-1}-1\right) \sum_{q=j_0}^t m_q + \sum_{q=j_0}^{j} m_q\\
    &<\sum_{q=1}^{j_0-1} \left(\sum_{p=q}^t m_p\right) \left(n_q - n_{q-1}\right)+\left(i'-n_{j_0-1}-1\right) \sum_{q=j_0}^t m_q + \sum_{q=j_0}^{j'-1} m_q + s'=f_*\left(v_{i'}^{j',s'}\right).
\end{align*} 
\item[Case 3 ($s \ne s'$, $j=j'$, and $i=i'$):] 
It is obvious that $f_*\left(v_i^{j,s}\right)\ne f_*\left(v_{i'}^{j',s'}\right)$.
\end{description}
In all cases we obtain $f_*\left(v_i^{j,s}\right)\ne f_*\left(v_{i'}^{j',s'}\right)$, which means 
$f_*$ is an injection, and so $f_*$ is a bijection.

As a last step, we will show that each $D$-weight is distinct. Under $f_*$, the $D$-weight for each vertex is: 
\begin{align*}
\omega_D\left(v_1^{j,s}\right)&=s+\sum_{q=1}^{j-1}m_q \text{ for } 1\le j\le t,1\le s\le m_j\text{ and }\\
\omega_D\left(v_i^{j,s}\right)&=2\sum_{q=1}^{j_0-1}\left(\sum_{p=q}^{t}m_p\right)\left(n_q-n_{q-1}\right)+\left(2i-2n_{j_0-1}-3\right)\sum_{q=j_0}^{t}m_q+2\sum_{q=j_0}^{j-1}m_q+2s \\ &\text{ for } n_{j_0-1}+1\le i\le n_{j_0},i\ne 1,n_0=0,1\le j_0\le t.
\end{align*}
        
Let $v_{i,s}^j$ and $v_{i',s'}^{j'}$ be any vertices in $\mathcal{F}$. If not specified, all these indices (i.e. $i,s,j, i',s',j'$) are taken arbitrarily, where $1\le i,i'\le n_j;1\le s,s'\le m_j;$ and $1\le j,j'\le t$. Consider the following cases.
\begin{description}
    \item[Case 1 ($i\ne i'$):] If $i>i'$ then $\omega_D(v_{i,s}^{j})<\omega_D(v_{i',s'}^{j'})$.
    \item[Case 2 ($i=i'$):] Consider two subcases based on $j$.
    \begin{description}
        \item[Subcase 2.1 ($j\ne j'$):] If $j<j'$ then $\omega_D(v_{i,s}^{j})<\omega_D(v_{i,s'}^{j'})$.
        \item[Subcase 2.2 ($j= j'$):] If $s<s'$ then $\omega_D(v_{i,s}^{j})<\omega_D(v_{i,s'}^{j})$.
    \end{description}
\end{description}
Thus, $f_*$ is a $\{0,1\}$-antimagic labeling of $\mathcal{F}$. 
\end{proof}

Figure \ref{fig:F0,1} provides an example of a labeling in Theorem \ref{thm:F}.
\begin{figure}[h!]
    \centering    \includegraphics[width=0.7\linewidth]{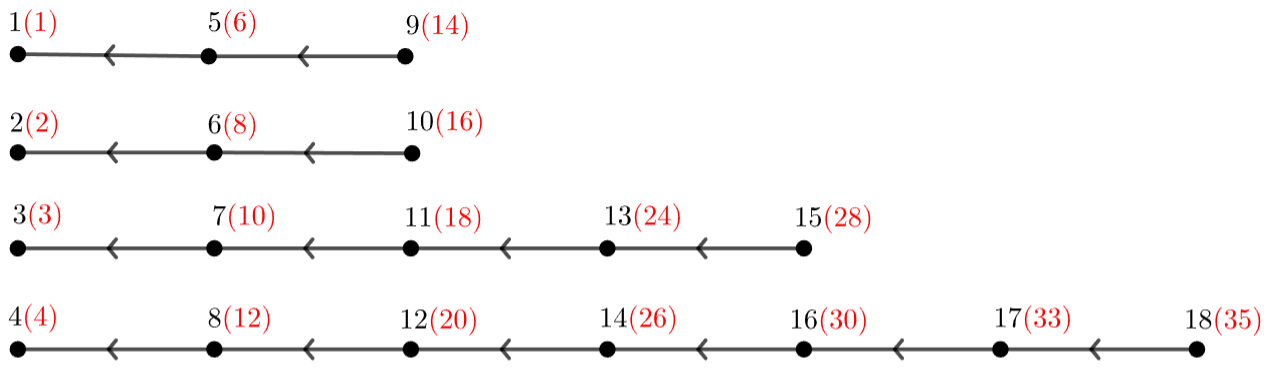}
    \caption{A $\{0,1\}$-antimagic labeling on $\mathcal{F}=2\overrightarrow{P_3}\cup \overrightarrow{P_5}\cup \overrightarrow{P_7}$. The black numbers are the labels and the red numbers inside the brackets are the $\{0,1\}$-weights.}
    \label{fig:F0,1}
\end{figure}

Theorem \ref{thm:F} provides a $D$-antimagic labeling on unidirectional $\mathcal{F}$ where $D=\{0,1\}$. Therefore, it is natural to propose the following open problem.
\begin{problem}
Find all sets $D$ and orientation $\Omega$ such that $\mathcal{F}$ with orientation $\Omega$ is $D$-antimagic.
\end{problem}

\section*{Acknowledgement} 
This research has been partially supported by Riset Unggulan ITB 2023.


\end{document}